\documentclass[11pt]{article}
\usepackage{amsmath, array, amstext, amsthm, amssymb, amsfonts, latexsym, amscd, mathtools}
\usepackage{caption}
\usepackage{subcaption}

\newcommand\numberthis{\addtocounter{equation}{1}\tag{\theequation}}
\usepackage{authblk}
\usepackage{tikz}
\usetikzlibrary{shadings,patterns,decorations.pathreplacing}
\newcolumntype{C}{>{$}c<{$}}
\newtheorem{theorem}{Theorem}[section]

\newtheorem{lemma}[theorem]{Lemma}
\newtheorem{proposition}[theorem]{Proposition}
\newtheorem{definition}[theorem]{Definition}
\newtheorem{remark}[theorem]{Remark}
\newtheorem{conjecture}[theorem]{Conjecture}
\newtheorem{example}[theorem]{Example}

\title{\bf On a conjecture of DeLaVi\~na and Waller}
\author[1]{Dinesh Pandey \thanks{Corresponding author: dpandey@wlu.ca}}
\author[2]{Peruvemba Sundaram Ravi \thanks{pravi@wlu.ca}}
\affil[1,2] {Lazaridis School of Business and Economics,
             Wilfrid Laurier University, 75 University Avenue West, Waterloo, Ontario, N2L3C5, Canada.}
\date{}

\begin{document}
\maketitle
\begin{abstract}
The Wiener index of a connected graph is defined as the sum of distances between all its unordered pairs of vertices. Characterising graphs on $n$ vertices with a fixed diameter that maximise the Wiener index is a long-standing open problem. This problem has been resolved fully for trees on $n$ vertices with diameter $d \in \{1,2,3,4,n-3,n-2,n-1\}$ while partial results are available for $d=5$ and $6$. In this context, a conjecture proposed by DeLaVi\~na and Waller has remained open for the last 18 years.

In this paper, we establish a necessary condition for a tree to attain the maximum Wiener index among all trees on $n$ vertices with a given diameter. Using this condition, we characterise the maximal trees for diameter $n-4$ and $n-5$. Furthermore, we prove the DeLaVi\~na Waller conjecture for the classes of graphs having $0,1,2,3$ or $n-4$ cut vertices.\\

\noindent {\bf Keywords:} Wiener index; diameter; cut vertex; extremal problems\\

\noindent {\bf AMS subject classification:}  05C09; 05C12; 05C35;  05C75\\

\noindent {\bf Conflicts of Interest:} The authors declare no conflict of interest.
\end{abstract}

\section{Introduction}
Let $G$ be a finite connected graph with the vertex set $V(G)$ and the edge set $E(G)$. The distance between two vertices $u$ and $v$ of $G$ is defined as the number of edges in a shortest path joining them, and is denoted by $d_G(u,v)$ or by $d(u,v)$ when the graph is clear from the context. The {\it Wiener index} of $G$ is the sum of distances between all its unordered pairs of vertices, denoted by $W(G)$. Thus 
\begin{equation*}
W(G)=\sum_{u,v\in V(G)}{d(u,v)}.
\end{equation*}
The Wiener index was first defined in 1947 by the chemist H.~Wiener~\cite{Wiener}, and has been widely studied in chemistry because many physico-chemical properties of chemical compounds are related to the Wiener index of the graph representing their molecular structure. The paper~\cite{Ng} (1966/67) appears to be the first work in mathematics in which the Wiener index (referred to there as the \emph{sum of all distances}) was studied for digraphs. In 1984 Plesn\'ik \cite{Plesnik} characterised the graphs with the minimum Wiener index among all graphs on $n$ vertices with a fixed diameter (see \cite{Plesnik}, Theorem 2) and raised the problem of identifying the graphs with the maximum Wiener index in the same family. The problem remains completely unsolved except for the trees with diameter $1,2,3,4,5,6, n-3,n-2$ and $n-1$. With some restrictions to this problem DeLaVi\~na and Waller posed the following conjecture in 2008.

\begin{conjecture}[\cite{DeLaVina}, Conjecture 7]\label{the conjecture}
Let $G$ be a graph with diameter $d\geq 3$ and $|V(G)|=2d+1$. Then $W(G)\leq W(C_{2d+1})$.
\end{conjecture}
\noindent  Knor \textit{et al.} have also mentioned this conjecture in their recent survey paper \cite{Knor}. In this paper, we verify the truth of this conjecture for some graph families. In \cite{Pandey} and \cite{Pandey 1}, the authors have studied the extremal graphs with respect to Wiener index in the family of graphs with a fixed number of cut vertices. A major part of this paper is built upon the findings of \cite{Pandey 1}. To show $G_0$ is a graph with maximum Wiener index in a family of graphs $\mathcal{F}$, sometimes it is easy to  find a graph $G'$, not necessarily in $\mathcal{F}$ such that for any $G\in \mathcal{F}$, $W(G)\leq W(G')\leq W(G_0)$. This idea is used to prove our main results in Section \ref{Graphs with cut vertices}.\\

The distance of a vertex $v$ in $G$ is denoted by $D_G(v)$ and defined as 
\begin{equation*}
D_G(v)=\sum_{u\in V(G)}{d(v,u)}.
\end{equation*}
It follows that 
\begin{equation}\label{WI in terms of distance}
W(G)=\frac{1}{2}\sum_{v\in V(G)}{D_G(v)}.
\end{equation}
Prior to the work in \cite{Ng}, extremal problems concerning the distance of a vertex in trees were studied by Harary \cite{Harary}, who referred to this as the status of a vertex. (\ref{WI in terms of distance}) is very useful in counting the Wiener indices of small graphs. Further, the following theorem by Balakrishnan \textit{et al.} \cite{Bala} provides an effective way to compute the Wiener index of a graph with cut vertices. 

\begin{lemma} [\cite{Bala}, Lemma 1.1]\label{count}
Let $G$ be a graph and $w$ be a cut vertex of $G$. Let $G_1$ and $G_2$ be two subgraphs of $G$ such that $G\cong G_1 \cup G_2$ and $V(G_1) \cap V(G_2)= \{w\}$. Then
$$W(G)=W(G_1)+W(G_2)+(|V(G_1)|-1)D_{G_2}(w)+(|V(G_2)|-1)D_{G_1}(w).$$
\end{lemma}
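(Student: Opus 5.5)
The plan is to split the sum defining $W(G)$ according to where the two vertices of each pair lie, and to use that $w$ separates $G_1$ from $G_2$. Since $V(G_1)\cap V(G_2)=\{w\}$ and $G=G_1\cup G_2$, every unordered pair $\{u,v\}$ of distinct vertices of $G$ is of exactly one of three types:
\begin{enumerate}
\item both $u,v\in V(G_1)$;
\item both $u,v\in V(G_2)$, with $\{u,v\}\not\subseteq V(G_1)$;
\item $u\in V(G_1)\setminus\{w\}$ and $v\in V(G_2)\setminus\{w\}$.
\end{enumerate}
The only overlap between the first two types would be pairs lying in $V(G_1)\cap V(G_2)=\{w\}$, and there are none, so this is a genuine partition.

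The first step is to check that distances are inherited: $d_G(u,v)=d_{G_i}(u,v)$ whenever $u,v\in V(G_i)$. Take a shortest $u$--$v$ path in $G$. If it left $G_i$, it would have to pass through $w$ once to exit and once to re-enter, since $w$ is the only vertex common to $G_1$ and $G_2$ and every edge of $G$ lies in $G_1$ or in $G_2$. A path cannot repeat the vertex $w$, so the shortest path stays inside $G_i$. Consequently the first two types of pairs contribute exactly $W(G_1)+W(G_2)$.

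The second step handles the crossing pairs. For $u\in V(G_1)\setminus\{w\}$ and $v\in V(G_2)\setminus\{w\}$, every $u$--$v$ path passes through $w$, because $w$ separates the two parts. Combining this with the first step gives
\[
d_G(u,v)=d_{G_1}(u,w)+d_{G_2}(w,v).
\]
Now sum over all such pairs. Each fixed $u$ appears with $|V(G_2)|-1$ choices of $v$. Summing $d_{G_1}(u,w)$ over $u\neq w$ gives $D_{G_1}(w)$, and the $u=w$ term is zero anyway. This yields $(|V(G_2)|-1)D_{G_1}(w)$, and symmetrically $(|V(G_1)|-1)D_{G_2}(w)$. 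Adding the three contributions gives the stated formula.

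The only step requiring care is the claim that geodesics in $G$ between vertices of the same $G_i$ stay inside $G_i$. This rests on the observation that a path leaving $G_i$ must use $w$ twice. Everything else is routine bookkeeping.
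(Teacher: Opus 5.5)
Your argument is correct, and nothing is missing. The paper gives no proof of its own here; it cites the lemma from Balakrishnan et al. Your proof is the standard direct one: split the pairs into those inside $G_1$, those inside $G_2$, and crossing pairs; show geodesics between vertices of one part stay in that part; and observe that crossing geodesics pass through $w$ and split additively.

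One detail is implicit in your geodesic step. No edge of $G_{3-i}$ can join two vertices of $G_i$, because both ends would then lie in $V(G_1)\cap V(G_2)=\{w\}$. So $G_i$ is an induced subgraph, and staying in $V(G_i)$ really means staying in $G_i$.
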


In this paper, every graph $G$ is finite, connected and undirected without loops and parallel edges. $|V(G)|$ denotes the number of vertices in $G$. For any subgraph $H$ of $G$, the {\it order} of $H$ is the number of vertices in $H$.  We denote the {\it diameter} of $G$ by $diam(G)$ and it is  defined as $diam(G)=\max\{d(u,v): u,v\in V(G)\}$. A {\it cut vertex} of $G$ is a vertex whose removal makes the graph disconnected. A  $2$-connected graph is a graph which has no cut vertex. A vertex of degree one is called a {\it pendant vertex}. If there is an edge joining two vertices $u$ and $v$, we say that $u$ and $v$ are adjacent and write $u\sim v$ or $\{uv\}\in E(G)$. A {\it block} in $G$ is a maximal $2$-connected subgraph of $G$. If $G$ is not $2$-connected, then every block of $G$ contains at least one cut vertex of $G$. A block containing exactly one cut vertex of $G$ is called a {\it pendant block} of $G$, otherwise it is called a {\it non-pendant block}. Two blocks are said to be {\it adjacent} if they share a (cut) vertex. The path and the cycle on $n$ vertices are denoted by $P_n$ and $C_n$, respectively. The complete bipartite graph $K_{1,n-1}$ is known as a {\it star} on $n$ vertices. If $G_1$ and $G_2$ are two isomorphic graphs, we write $G_1\cong G_2$. We denote the set of all graphs on $n$ vertices with $k$ cut vertices by $\mathfrak{C}_{n,k}$. The family of graphs on $n$ vertices and $k$ cut vertices with diameter $d$ is denoted by $\mathfrak{C}_{n,k}(d)$, and the family of graphs on $n$ vertices with diameter $d$ is denoted by $\mathfrak{C}_n(d)$. $\mathfrak{T}_{n,k}(d)$ denotes the family of all trees on $n$ vertices and $k$ cut vertices with diameter $d$, and  $\mathfrak{T}_n(d)$ denotes the family of all trees on $n$ vertices with diameter $d$.\\

The paper is organised in the following way. In section \ref{Prelim}, we recall some definitions and results from the literature which will be used in proving our main results. Section \ref{Trees} is a discussion about the trees with the maximum Wiener index in the family of trees on $n$ vertices with a fixed diameter, where we give a necessary condition for a tree to be of maximal Wiener index in this family, and characterise the maximal trees when the diameter is $n-4$ or $n-5$. In Section \ref{edge-minimality}, we discuss edge-minimal graphs which are used as a tool to prove our results in the succeeding section. In Section \ref{Graphs with cut vertices}, we prove the DeLaVi\~na Waller conjecture for the graphs with $0,1,2,3$ or $n-4$ cut vertices. Finally, Section \ref{Conclusions} presents the conclusions and discusses potential avenues for future work.

\section{Preliminaries}\label{Prelim}

The following lemma shows the effect of edge deletion on the Wiener index of a graph, and follows from the definition of the Wiener index.
\begin{lemma}\label{edge deletion}
Let $G$ be a graph and $e\in E(G)$ such that $G-e$ is connected. Then $W(G)<W(G-e)$.
\end{lemma}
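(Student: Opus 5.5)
The plan is a pairwise comparison of distances. Write $G' = G - e$ with $e = \{xy\}$. Since $G'$ is connected, $d_{G'}(u,v)$ is defined for every pair of vertices. Both graphs have the same vertex set, so it is enough to compare $d_G(u,v)$ and $d_{G'}(u,v)$ pair by pair and then sum over all unordered pairs.

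First, I show $d_G(u,v) \le d_{G'}(u,v)$ for every pair. Every path in $G'$ is also a path in $G$, because $E(G') \subseteq E(G)$. Hence a shortest $u$--$v$ path in $G'$ is a $u$--$v$ path in $G$, and its length bounds $d_G(u,v)$ from above. Summing this inequality over all unordered pairs gives $W(G) \le W(G')$.

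Second, I need one pair where the inequality is strict, and the natural choice is the endpoints $x,y$ of $e$. In $G$ we have $d_G(x,y)=1$. In $G'$ the vertices $x$ and $y$ are not adjacent, since there are no parallel edges, so any $x$--$y$ path in $G'$ has length at least $2$. Such a path exists because $G'$ is connected, so $d_{G'}(x,y) \ge 2 > 1$. Adding this strict inequality to the weak inequalities for all other pairs gives $W(G) < W(G')$.

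There is no real obstacle here. The only points needing care are that connectivity of $G-e$ makes $W(G-e)$ finite and well defined, and that simplicity of $G$ guarantees $x \not\sim y$ in $G-e$; both are covered by the hypotheses and the paper's standing conventions.
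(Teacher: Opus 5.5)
Your proof is correct and is the argument the paper intends. The paper gives no written proof, remarking only that the lemma follows from the definition of the Wiener index; your pairwise comparison, with $d_G(u,v)\le d_{G-e}(u,v)$ for all pairs and strict inequality at the endpoints of $e$, is that verification written out.
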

\noindent We recall the Wiener indices of some graphs which can be computed easily using (\ref{WI in terms of distance}).
\begin{enumerate} 
\item[$(ii)$] $W(P_n)={n+1 \choose  3}$, 
\item[$(iii)$] $W(K_{1,n-1})=(n-1)^2$,
\item[$(iv)$] $W(C_n)=\begin{cases}  \frac{1}{8}n^3 &\textit{if n is even},\\  \frac{1}{8}n(n^2-1) & \textit{if n is odd.}\end{cases}$
\end{enumerate} 
Also for $u\in V(C_n)$, its distance is
\begin{equation*}
D_{C_n}(u)=\begin{cases}\frac{n^2}{4} &\textit{if n is even}, \\ 
\frac{n^2-1}{4} & \textit{if n is odd.}\end{cases}
\end{equation*}
Let $P_n:v_0v_1\cdots v_{n-1}$ be a path on $n$ vertices. Then it can be readily shown that, for $0\leq i\leq n-1$, 
\begin{align*}
D_{P_n}(v_i)=\frac{n^2+2i^2-(2n-2)i-n}{2}.
\end{align*}

For $g\geq 3$ and $n\geq 4$, let $L_{n,g}$ be the graph obtained by identifying a pendant vertex of the path $P_{n-g+1}$ with a vertex of the cycle $C_g$. $L_{n,g}$ is a unicyclic graph of girth $g$ and belongs to $\mathfrak{C}_{n,n-g}$. It is an important graph, as in many graph families, it (for specific values of $g$) attains maximum Wiener index. For example among unicyclic graphs on $n$ vertices $L_{n,3}$ attains the maximum Wiener index, and among unicyclic graphs with girth $g$, $L_{n,g}$ attains the maximum (see \cite{Feng}). Furthermore, among graphs with at most $k\leq 3$ cut vertices, $L_{n,n-k}$ attains the maximum (see \cite{Pandey 1}). The expression for the Wiener index of $L_{n,g}$ can be found in \cite{Feng} (Theorem 1.1) as the following. 
\begin{equation}\label{WI_Lnk}
W(L_{n,g})=\begin{cases} 
\frac{g^3}{8}+(n-g)(\frac{n^2+ng+3g-1}{6}-\frac{g^2}{12})  &\mbox{ if g is even}, \\ 
\frac{g(g^2-1)}{8}+(n-g)(\frac{n^2+ng+3g-1}{6}-\frac{g^2}{12}-\frac{1}{4})  &\mbox{ if g is odd}.
\end{cases}
\end{equation}


\begin{definition}[\cite{Bapat2}]\label{convexfunction_tree}
Let $T$ be a tree with $|V(T)|\geq 3$ and let $f:V(T)\rightarrow (-\infty,\infty)$ be a function. Then $f$ is said to be {\it convex} if for any distinct $u, v, w\in V(T)$ with $u\sim w\sim v$, $2f(w)\leq f(u)+f(v)$ and strictly convex if $2f(w)< f(u)+f(v).$ We say that $f$ is  quasiconvex if for any distinct $u, v, w\in V(T)$ with $u\sim w\sim v$, $f(w)\leq \max\{f(u), f(v)\}$ and strictly quasiconvex if $f(w)< \max \{f(u),f(v)\}$.
\end{definition}

\begin{proposition} [\cite{Bapat2}, Lemma 2.1]\label{convexity_trees}
Let $T$ be a tree with $|V(T)|\geq 3$ and let $f:V(T)\rightarrow (-\infty,\infty)$ be either strictly convex or strictly quasiconvex. Then $f$ attains its minimum either at a unique vertex or at two adjacent vertices. Furthermore,
\begin{enumerate}
\item[(i)] if $f$ attains its minimum at the unique vertex $v_0$, then for any path $v_0v_1\cdots v_k$ starting at $v_0$, $f(v_0)<f(v_1)<\ldots < f(v_k)$. (We say that $f$ is strictly increasing along any path starting at $v_0$.)
\item[(ii)] If $f$ attains its minimum at the two adjacent vertices $u_0$ and $v_0$, then $f$ is strictly increasing along any path starting at $u_0$ and not containing $v_0$ or starting at $v_0$ and not containing $u_0$. 
\end{enumerate}
\end{proposition}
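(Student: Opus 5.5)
We prove the statement directly from Definition~\ref{convexfunction_tree}, first in the strictly quasiconvex case. Since strict convexity gives $2f(w)<f(u)+f(v)\le 2\max\{f(u),f(v)\}$, every strictly convex function is strictly quasiconvex, so no separate convex case is needed. Throughout we use that $f$ is strictly quasiconvex and that $T$ has at least three vertices, so that every interior vertex of a path has two distinct path-neighbours.

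\textbf{Step 1: a path has no interior weak local maximum.} Let $P: x_0x_1\cdots x_k$ be any path in $T$. For $0<i<k$ the vertices $x_{i-1},x_i,x_{i+1}$ are distinct and $x_{i-1}\sim x_i\sim x_{i+1}$. Hence $f(x_i)<\max\{f(x_{i-1}),f(x_{i+1})\}$.

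\textbf{Step 2: once $f$ is non-decreasing along a path, it is strictly increasing afterwards.} Suppose $f(x_0)\le f(x_1)$. We show by induction that $f(x_1)<f(x_2)<\cdots<f(x_k)$. Assume $f(x_{i-1})\le f(x_i)$ with $0<i<k$. By Step 1, $f(x_i)<\max\{f(x_{i-1}),f(x_{i+1})\}$. The maximum cannot be $f(x_{i-1})$, since $f(x_{i-1})\le f(x_i)$. Therefore $f(x_i)<f(x_{i+1})$, which also supplies the hypothesis for the next index.

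\textbf{Step 3: structure of the minimiser set $M$.} $M$ is nonempty because $T$ is finite. Let $u,v\in M$ be distinct and suppose they are not adjacent. The $u$--$v$ path $u=x_0,x_1,\dots,x_k=v$ then has $k\ge 2$. Since $f(x_1)\ge f(u)=\min f$, Step 2 applies from $x_0$ and gives $f(v)>f(x_1)\ge \min f$, a contradiction. So $M$ is a clique in $T$. A tree contains no triangle, so $|M|\le 2$: either $M$ is a single vertex or two adjacent vertices.

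\textbf{Step 4: the monotonicity statements.} In case (i), take any path $v_0v_1\cdots v_k$ starting at the unique minimiser $v_0$. Then $f(v_0)<f(v_1)$ because $v_1\notin M$, and Step 2 gives strict increase along the rest of the path.

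In case (ii), take a path starting at $u_0$ that does not contain $v_0$. Its second vertex $x_1$ differs from $v_0$, so $x_1\notin M$ and $f(u_0)<f(x_1)$; Step 2 then finishes the argument. The situation for paths starting at $v_0$ and avoiding $u_0$ is symmetric.

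The only step needing care is Step 2: the inequality must be propagated with the correct direction, using strictness at each interior vertex. The rest is bookkeeping.
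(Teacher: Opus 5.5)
Your proof is correct and complete. The paper gives no argument for this statement: it quotes it as Lemma 2.1 of the cited reference and uses it as a black box, for instance to obtain $D_{T_{02}}(v_\ell)>D_{T_{02}}(v')$ in the proof of Theorem~\ref{Tree diameter max}. So there is no in-paper route to compare against, and your argument serves as a self-contained proof of the imported result.

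The structure is sound throughout:
\begin{enumerate}
\item[(1)] You reduce strict convexity to strict quasiconvexity via $2f(w)<f(u)+f(v)\le 2\max\{f(u),f(v)\}$.
\item[(2)] Strict quasiconvexity rules out an interior weak local maximum on any path, so a single non-decreasing step forces strict increase from then on. Your handling of ties in Step 2 is right: if the maximum equals $f(x_{i-1})$, you get $f(x_i)<f(x_{i-1})\le f(x_i)$.
\item[(3)] Any two distinct minimisers must therefore be adjacent, and since a tree has no triangle, $|M|\le 2$.
\item[(4)] Parts (i) and (ii) follow because the second vertex of the relevant path is not a minimiser.
\end{enumerate}

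One cosmetic remark: the hypothesis $|V(T)|\ge 3$ plays no role in your argument. Interior vertices of a path always have two distinct path-neighbours, and for smaller trees the statement is trivial. You can drop the sentence that attributes that role to it.
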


 \section{Trees with a fixed diameter}\label{Trees}
Trees in $\mathfrak{T}_n(d)$ attaining the maximum Wiener index have been determined for $d \in \{1,2,3,4,n-3,n-2,n-1\}$, while partial results are known for $d=5$ and $6$. 
The problem remains open for $7 \le d \le n-4$. In this section, we establish a necessary condition for a tree to be of maximal Wiener index in $\mathfrak{T}_n(d)$. Using this condition, we determine the maximal trees in $\mathfrak{T}_n(n-4)$ and $\mathfrak{T}_n(n-5)$.\\

 \begin{definition}[\cite{Wagner}, Definition 1]
 Let $(c_1,c_2,\ldots, c_t)$ be a partition of $n-1$. $S(c_1,c_2,\ldots, c_t)$ is the tree assigned to this partition in which the vertices $v_1, v_2, \ldots, v_t$ have degrees $c_1,c_2, \ldots, c_t$, respectively. 
 \end{definition}

\noindent Note that $S(c_1,c_2,\ldots, c_t)$ is a tree on $n$ vertices with diameter at most $4$. Also any tree on $n\geq 5$ vertices and diameter $2\leq d\leq 4$ is of the form $S(c_1,c_2,\ldots, c_t)$ for some partition $(c_1,c_2,\ldots, c_t)$ of $n-1$. Wagner proved the following.
 
 \begin{proposition}[\cite{Wagner}, Theorem 3]\label{Wagner}
 In $\mathfrak{T}_n(d)$, $2\leq d\leq 4$, the tree $S(k,k,\ldots k, k+1,k+1,\ldots k+1)$ where $k=\lfloor\sqrt{n-1}\rfloor$ has the maximal Wiener index. If $k^2+k>n-1$, then $k$ appears $k^2+k-n+1$ times and $k+1$ appears $n-1-k^2$ times. If $k^2+k\leq n-1$, then $k$ appears $k^2+2k-n+2$ times and $k+1$ appears $n-1-k^2-k$ times.
 \end{proposition}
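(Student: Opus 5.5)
The plan is to turn the Wiener index of $S(c_1,\ldots,c_t)$ into an explicit function of the partition and then optimise that function over all partitions of $n-1$.

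\textbf{Structure and formula.} I read $S(c_1,\ldots,c_t)$ as follows: a centre $x$ is adjacent to $v_1,\ldots,v_t$, and each $v_i$ carries $c_i-1$ pendant vertices. This gives $1+\sum_i c_i=n$ vertices, as required. By the remark before the statement, every tree of diameter at most $4$ has this form. I will therefore maximise over all partitions and then check that the maximiser has diameter $4$ whenever $n$ is large enough for $d=4$ to be possible. The statement for $2\le d\le 4$ should be understood as the maximum over trees of diameter at most $4$ (equivalently over $\mathfrak{T}_n(4)$), since for $d=2,3$ the families are tiny. To compute $W$, I use the edge form of the Wiener index of a tree: $W(T)=\sum_{e}n_1(e)n_2(e)$, where $n_1(e),n_2(e)$ are the orders of the two components of $T-e$. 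This follows from Lemma~\ref{count} applied repeatedly at cut vertices, or directly, since each path uses each of its edges once. The edge $xv_i$ contributes $c_i(n-c_i)$, and each of the $n-1-t$ pendant edges at the $v_i$ contributes $n-1$. Hence
\[
W(S(c_1,\ldots,c_t))=n(n-1)+(n-1)^2-(n-1)t-\sum_{i=1}^t c_i^2 .
\]
So maximising $W$ is the same as minimising $F(c)=\sum_i c_i^2+(n-1)t$ over partitions of $n-1$.

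\textbf{Fixed $t$.} For a fixed number of parts $t$, a standard exchange argument shows that $\sum c_i^2$ is minimised exactly by the balanced partition. If $c_i\ge c_j+2$, replacing $(c_i,c_j)$ by $(c_i-1,c_j+1)$ lowers $\sum c_i^2$ by $2(c_i-c_j)-2>0$. Write $n-1=qt+r$ with $0\le r<t$. Then the balanced partition has $r$ parts equal to $q+1$ and $t-r$ parts equal to $q$, and
\[
g(t)=\min F=tq^2+r(2q+1)+(n-1)t .
\]

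\textbf{Choosing $t$ (main obstacle).} The remaining task is to minimise $g(t)$ over integers $t\ge 1$. Heuristically $g(t)\approx (n-1)^2/t+(n-1)t$, which is minimised at $t\approx\sqrt{n-1}$. Since the parts are then about $\sqrt{n-1}$, they take the values $k=\lfloor\sqrt{n-1}\rfloor$ and $k+1$. Making this exact is the delicate step.

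My approach is to reverse the roles. In the optimal partition all parts lie in $\{m,m+1\}$ for some $m$, so I parametrise by the part sizes rather than by $t$. Moving from a partition into parts $\{m,m+1\}$ to a coarser one with one fewer part changes $\sum c_i^2$ by roughly $2m$ per unit moved, while the term $(n-1)t$ drops by $n-1$. I will compare $F$ directly for neighbouring candidate partitions (one part merged or split) and show:
\begin{itemize}
\item if some part is $\le k-1$, then merging increases $\sum c_i^2$ by less than $n-1$, so $F$ decreases;
\item if some part is $\ge k+2$, then splitting decreases $F$.
\end{itemize}
This forces all parts into $\{k,k+1\}$.

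Then $t$ is pinned down by the condition $\sum c_i=n-1$, together with the minimality of $t$ when both $t$ and $t+1$ are feasible. The two subcases $k^2+k>n-1$ and $k^2+k\le n-1$ should then give exactly the multiplicities in the statement. The tie case $n-1=k^2+k$ needs care, since both $t=k$ and $t=k+1$ may be candidates and a direct evaluation of $F$ decides between them.

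Finally, I will check that the resulting tree has at least two parts of size at least $2$, so that its diameter is $4$, except in the trivial small cases.
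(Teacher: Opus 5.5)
The paper quotes this result from Wagner without proof, so your argument has to be judged on its own. Your reduction is correct: with $N=n-1$, $W(S(c_1,\ldots,c_t))=(n-1)(2n-1)-Nt-\sum_i c_i^2$, so you must minimise $F=\sum_i c_i^2+Nt$. Balancing for fixed $t$ is also correct, and so is reading $2\le d\le 4$ as \emph{diameter at most $4$}. The gap is in the only nontrivial step, the choice of $t$. There the two bulleted claims are asserted, and the justification you sketch does not establish them.

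If \emph{merging} and \emph{splitting} mean combining two parts or cutting one part in two, these moves change $F$ by $2ab-N$ and $N-2ab$, and both claims are false. For $N=100$ ($k=10$), the balanced partition into four $9$s and eight $8$s is stable under every such move, since $2ab\ge 128$ for merges and $2ab\le 40$ for splits. Yet $F=2036>2000=F(10,\ldots,10)$. Likewise, four $13$s and four $12$s ($F=2052$) admit no improving split, while the balanced partition with $9$ parts gives $F=2012$.

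What you need is an exact comparison of the balanced partitions with $t$ and $t\pm1$ parts. Read that way, your claims are true. But your estimate of roughly $2m$ per unit moved discards terms of order $k$, which is exactly the size of the slack. For $N=k^2+k-1$, going from $t=k+1$ (parts $k-1,k,\ldots,k$) to $t=k$ raises $\sum c_i^2$ by exactly $k^2+k-2=N-1$, a margin of $1$. Also, at $N=k^2+k$ the trees $S(k+1,\ldots,k+1)$ and $S(k,\ldots,k)$ genuinely tie. Evaluating $F$ will not decide between them; the stated tree is one of two maximisers.

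A clean way to close the gap avoids $t$-by-$t$ comparisons altogether. Put $\delta=N-k^2-k$, so $-k\le\delta\le k$, and use the identity
\[ F=\sum_{i=1}^t\bigl(c_i^2+N\bigr)=(2k+1)N+Q+t\delta,\qquad Q=\sum_{i=1}^t (c_i-k)(c_i-k-1)\ge 0, \]
where $Q=0$ if and only if every $c_i\in\{k,k+1\}$. The stated tree has $Q=0$, with $t=k$ if $\delta<0$ and $t=k+1$ if $\delta\ge 0$. So you must show:
\begin{itemize}
\item $Q\ge (t-k)|\delta|$ when $\delta<0$;
\item $Q\ge (k+1-t)\delta$ when $\delta\ge 0$.
\end{itemize}
Both are trivial when the right-hand side is $\le 0$. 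Otherwise apply $x(x+1)\ge 2x$, which holds for all integers $x$.

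With $x=k-c_i$ and $t=k+m$, $m\ge1$, you get $Q\ge 2(kt-N)=2k(m-1)+2|\delta|\ge m|\delta|$. With $x=c_i-k-1$ and $t=k-m$, $m\ge 0$, you get $Q\ge 2(N-(k+1)t)=2(k+1)m+2\delta\ge(m+1)\delta$. The last inequality in each case uses $|\delta|\le k$. Tracking the equality cases recovers exactly the multiplicities in the statement, plus the single tie at $\delta=0$.
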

 
 \begin{proposition}[\cite{Mukwembi}, Theorem 3.3]
 Let $T\in \mathfrak{T}_n(5)$. Then $W(T)\leq \frac{9n^2}{4}-2n^{\frac{3}{2}}+O(n)$ and the bound is best possible.
 \end{proposition}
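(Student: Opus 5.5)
We bound $W(T)$ by counting, for each pair of vertices, how far its distance falls short of the largest value it could have, and then choose a tree that keeps the total shortfall as small as possible. Since $diam(T)=5$ is odd, $T$ has a central edge $uv$. Deleting $uv$ leaves two components, $A\ni u$ and $B\ni v$, with $|A|=m_1$, $|B|=m_2$ and $m_1+m_2=n$. Every vertex of $A$ is within distance $2$ of $u$, and similarly for $B$ and $v$. Let $a_1,\dots,a_p$ be the neighbours of $u$ in $A$, and let $s_i\geq 0$ be the number of leaves hanging from $a_i$. Define $b_1,\dots,b_q$ and $t_j$ in the same way on the $B$-side. Then
\[
m_1=1+p+\sum_i s_i, \qquad m_2=1+q+\sum_j t_j .
\]
The plan is to write $W(T)$ exactly in terms of $p,q,s_i,t_j$, using Lemma~\ref{count} at the cut vertex $u$ (or $v$) or a direct pair count. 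Then compare with the crude maximum, in which every cross pair $A\times B$ has distance $5$ and every pair inside a side has distance $4$:
\[
5m_1m_2+4\binom{m_1}{2}+4\binom{m_2}{2}.
\]
Its leading part is $5m_1m_2+2(m_1^2+m_2^2)$, which is maximised at $m_1=m_2=n/2$ with value $\frac{9n^2}{4}+O(n)$.

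The core step is to show that the deficit on each side is at least about $2pm+m^2/p$, where $m$ is the size of that side. Consider the $A$-side. Two leaves under the same $a_i$ are at distance $2$ instead of $4$, so this costs $2\binom{s_i}{2}$ for each $i$. In total that is about $\sum_i s_i^2$, which by Cauchy--Schwarz is at least $(m_1-p-1)^2/p$. Each middle vertex $a_i$ loses distance in two ways:
\begin{itemize}
\item it is at distance at most $4$, not $5$, from every vertex of $B$ other than $v$ and the $b_j$, a loss of about $m_2$;
\item it is at distance $3$, not $4$, from the leaves of the other branches of $A$, a loss of about $m_1$.
\end{itemize}
So the middle vertices of $A$ cost about $p(m_1+m_2)=pn$. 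The $B$-side is symmetric. Hence
\[
W(T)\leq 5m_1m_2+2(m_1^2+m_2^2)-\Big(pn+\frac{m_1^2}{p}\Big)-\Big(qn+\frac{m_2^2}{q}\Big)+O(n).
\]

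Next I minimise the deficit. By AM--GM, $pn+m_1^2/p\geq 2m_1\sqrt{n}$, and likewise $qn+m_2^2/q\geq 2m_2\sqrt{n}$. The total deficit is therefore at least $2(m_1+m_2)\sqrt n=2n^{3/2}$, independent of the split, and the positive part is at most $\frac{9n^2}{4}+O(n)$. This gives $W(T)\leq \frac{9n^2}{4}-2n^{3/2}+O(n)$. For sharpness I take $m_1=m_2=\lfloor n/2\rfloor$ or $\lceil n/2\rceil$ and $p=q\approx\sqrt n/2$, with leaves split as evenly as possible among the branches. A direct count then shows that every inequality above is tight up to $O(n)$, so the bound is attained to that order.

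The main obstacle is the bookkeeping of the exact pair count, in particular keeping the lower-order terms honestly $O(n)$. Specifically:
\begin{itemize}
\item I must check that the $-1$'s and the $p+1$ correction in $m_1-p-1$ contribute only $O(n)$. This uses $p\leq m_1$ and the fact that $p^2$-type terms are absorbed once the deficit is already at least $pn$.
\item I must handle degenerate cases such as $p$ very small or $s_i=0$. When some $s_i=0$, that $a_i$ is a leaf; it still counts in the $pn$ loss, which only helps the inequality.
\end{itemize}
To keep this clean, I would write the exact formula for $W(T)$ first and only then drop the terms that are clearly $O(n)$.
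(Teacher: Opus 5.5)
The paper does not prove this proposition. It is quoted from Mukwembi as background for Section~3, so there is no in-paper argument to compare with. Your proposal is a correct, self-contained proof. The central-edge decomposition, the exact pair count, Cauchy--Schwarz, AM--GM, and the balanced construction with $p=q\approx\sqrt n/2$ all go through. The bookkeeping is also cleaner than you expect. Write $S=\sum_i s_i$ and $R=\sum_j t_j$, and note $p+q+S+R=n-2$. The deficits involving the middle vertices then add up to $(p+q)(S+R)+(p+q)^2=(p+q)(n-2)$, which gives the exact identity
\[
W(T)=2n^2+m_1m_2-(p+q)(n-2)-\sum_{i=1}^{p}s_i^2-\sum_{j=1}^{q}t_j^2-7n+6 .
\]
As a check, this gives $35$ for $P_6$ and $74$ for $T(2,2,4)$. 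Since the diameter is exactly $5$, we have $p,q\ge 1$. Hence
\[
\sum_i s_i^2\ge \frac{(m_1-p-1)^2}{p}\ge \frac{m_1^2}{p}-4m_1+p+2,
\]
and AM--GM then yields the explicit bound $W(T)\le \frac{9n^2}{4}-2n^{3/2}-2n$. Plugging your balanced tree into the same identity gives the matching lower bound up to $O(n)$, so sharpness follows as you describe.

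Two points in your heuristic need repair when you write it up.

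First, the per-vertex charges as stated undercount. The vertex $a_i$ is at distance $4$ from only $R=m_2-q-1$ vertices of $B$. It is at distance $3$ from only $S-s_i$ leaves of $A$. Summing your two bullets over $i$ therefore falls short of $pn$ by $p(p+q)+2p+S$. This is not $O(n)$ when $p$ and $q$ are large.

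The gap is not closed by ``absorbing $p^2$-type terms''. It is filled exactly by the pairs you left out, namely $a_i$--$a_k$ and $a_i$--$b_j$, each with deficit $2$. A clean way to organise the charge: for $x\in A$ and $y\in B$,
\[
5-d(x,y)=\bigl(2-d(x,u)\bigr)+\bigl(2-d(y,v)\bigr).
\]
Splitting each cross deficit between its endpoints this way charges each $a_i$ exactly $1$ against every one of the $m_2$ vertices of $B$.

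Second, the ``$2pm$'' in your core step should be $pn$; the two agree only when $m_1=m_2$. Your displayed inequality already uses the correct form.
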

 \begin{proposition}[\cite{Mukwembi}, Theorem 3.4]
 Let $T\in \mathfrak{T}_n(6)$. Then $W(T)\leq 3n^2-2\sqrt{6}n^{\frac{3}{2}} -2n-O(n^{\frac{1}{2}})$ and the bound is best possible.
 \end{proposition}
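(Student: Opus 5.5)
The bound is an asymptotic statement. I would prove it by writing $W(T)$ as a maximal possible value minus a ``loss'', bounding the loss from below by an optimisation, and then showing a suitable tree attains it. Let $T\in\mathfrak{T}_n(6)$ and let $c$ be the (unique) central vertex of a diametral path, so every vertex is within distance $3$ of $c$. Let $u_1,\dots,u_a$ be the neighbours of $c$. For each $i$ let $b_i$ be the number of depth-$2$ vertices and $m_i$ the number of depth-$3$ vertices in the branch at $u_i$. Set $b=\sum b_i$, $m=\sum m_i$, so $1+a+b+m=n$.

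The first step is an exact bookkeeping of $W(T)$. Two vertices at depths $s,t$ in different branches are at distance $s+t$. Two vertices in the same branch are at distance at most $s+t-2$, and at most $s+t-4$ if they are also in the same depth-$2$ subtree. Comparing every pair with the ideal distance $6$, I would write
$$W(T)=6\binom{n}{2}-L(T),$$
and bound the loss $L(T)$ from below as follows.
\begin{itemize}
\item Each depth-$1$ vertex loses about $2$ against almost every other vertex, about $2n$ in total.
\item Each depth-$2$ vertex loses about $n$.
\item Pairs of depth-$3$ vertices in the same branch lose $2$, and those under a common depth-$2$ parent lose $4$ in total.
\end{itemize}
Writing $m_{ij}$ for the number of leaves under the $j$-th depth-$2$ vertex of branch $i$, this gives
$$L(T)\ \ge\ 2an+bn+\sum_i m_i^2+\sum_{i,j}m_{ij}^2-O(n).$$
The $O(n)$ collects the terms coming from $6\binom n2=3n^2-3n$, from pairs not involving depth-$3$ vertices, and from the diagonal corrections $\binom{m}{2}$ versus $m^2/2$. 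These lower-order terms are what must ultimately produce the $-2n$.

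The second step is the optimisation, using convexity (Cauchy--Schwarz). We have $\sum_i m_i^2\ge m^2/a$ and $\sum_j m_{ij}^2\ge m_i^2/b_i$, with $m=n-O(a+b)$. This reduces the problem to minimising a function of the shape $2an+bn+\frac{n^2}{a}+\frac{n^2}{b}$ (or a variant of it) over real $a,b\ge 1$, subject to $b\ge a$ because each branch reaching depth $3$ needs a depth-$2$ vertex. At the optimum $a$ and $b$ are of order $\sqrt n$, and the minimum is $\kappa n^{3/2}$. The constant $\kappa$ must come out as $2\sqrt6$; a quick symmetric heuristic (one depth-$2$ vertex per branch) gives $3an+2n^2/a$, whose minimum $2\sqrt6\,n^{3/2}$ is attained at $a=\sqrt{2n/3}$. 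I would then expand around the optimum to second order to obtain the $-2n-O(\sqrt n)$ correction.

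The third step is sharpness. Take $a\approx\sqrt{2n/3}$ branches, each a path $c\,u_i\,w_i$ with about $n/a$ leaves attached to $w_i$, and adjust by $O(\sqrt n)$ vertices to make the count exactly $n$. Computing $W$ with Lemma~\ref{count} then shows the bound is attained up to $O(\sqrt n)$.

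The main obstacle is step two. My crude estimates suggest that spreading leaves over several depth-$2$ vertices per branch could lower the loss below $2\sqrt6\,n^{3/2}$ (the two Cauchy--Schwarz terms together give roughly $(2+2\sqrt2)n^{3/2}$). So the loss inequality must be sharpened before the optimisation is done. In particular I would re-examine the loss between depth-$2$ vertices and depth-$3$ vertices of the same branch, and the precise same-branch distance losses, until the minimiser is forced to the one-parent-per-branch configuration. Getting these exact coefficients right is where I expect the real work to lie.
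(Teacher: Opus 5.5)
The paper only quotes this result from Mukwembi, so your plan has to stand on its own. As written it does not: step two fails, and the remedy you propose points the wrong way. Your step-one loss inequality is fine at order $n^{3/2}$. What loses is decoupling the two Cauchy--Schwarz bounds $\sum_i m_i^2\ge m^2/a$ and $\sum_{i,j}m_{ij}^2\ge m^2/b$. They can be (nearly) tight together only if every branch has the same number $b/a$ of depth-$2$ vertices. The relaxed optimum $a=\sqrt{n/2}$, $b=\sqrt n$ has $b/a=\sqrt2$, which no tree realises, and this is exactly where the spurious constant $2+2\sqrt2<2\sqrt6$ comes from. In fact no tree has loss below $2\sqrt6\,n^{3/2}-O(n)$. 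Putting $k$ depth-$2$ vertices in every branch gives $2\sqrt{(2+k)(1+1/k)}\,n^{3/2}$, which equals $2\sqrt6\,n^{3/2}$ for $k=1$ \emph{and} $k=2$. So no sharpening of the loss can force one parent per branch at leading order. The missing idea is to keep the optimisation coupled branch by branch:
\begin{itemize}
\item use $\sum_j m_{ij}^2\ge m_i^2/b_i$ within each branch;
\item apply weighted Cauchy--Schwarz with weights $\frac{b_i+1}{b_i}$;
\item use the integrality inequality $(b_i-1)(b_i-2)\ge 0$, i.e.\ $2+b_i\ge \frac{6b_i}{b_i+1}$ for every integer $b_i\ge 1$.
\end{itemize}

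The $-2n$ term is a second gap. Your step-one bound already discards $O(n)$ terms, and expanding around the optimum does not give an inequality valid for all trees. Keep the bookkeeping exact instead. Root $T$ at $c$, let $S_v$ be the set of descendants of $v$ (including $v$), and put $N=n-1$ and $s_i=b_i+m_i$. From $W(T)=\sum_{v\ne c}|S_v|(n-|S_v|)$ one gets
\[
W(T)=3N^2-N(2a+b)-\sum_i s_i^2-\sum_{i,j}m_{ij}^2-b-2m .
\]
Substitute $\sum_j m_{ij}^2\ge (s_i-b_i)^2/b_i$. All linear terms cancel, leaving $W(T)\le 3N^2-N(2a+b)-\sum_{i:\,b_i\ge1}\frac{b_i+1}{b_i}s_i^2$. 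Put $B=\sum_{i:\,b_i\ge1}\frac{b_i}{b_i+1}$, and let $a_0$ and $a_1$ count the branches with $b_i=0$ and $b_i\ge1$ respectively. Then $2a+b\ge 2a_0+6B$. Since $\sum_i s_i=N-a$, Cauchy--Schwarz gives $\sum_i\frac{b_i+1}{b_i}s_i^2\ge (N-a)^2/B\ge N^2/B-2Na_0-4N$. The last step uses $B\ge 1$ (two branches reach depth $3$) and $B\ge a_1/2$. Hence
\[
W(T)\le 3N^2-\Bigl(6BN+\frac{N^2}{B}\Bigr)+4N\le 3N^2-2\sqrt6\,N^{3/2}+4N=3n^2-2\sqrt6\,n^{3/2}-2n+O(\sqrt n).
\]
The $+4N$ comes from $B\ge a_1/2$, which is tight only when every $b_i=1$. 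This is where the one-parent-per-branch configuration is actually singled out, at the linear rather than the $n^{3/2}$ level. Your step three then correctly shows the bound is attained up to $O(\sqrt n)$.
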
 
 
 A {\it caterpillar} is a tree which becomes a path when all its pendant vertices are deleted. For positive integers $\ell,k$ and $d$ with $n=\ell+k+d$, $T(\ell,k,d)$ denotes the caterpillar obtained from the path $P_d$ by adding $\ell$ pendant vertices adjacent to one pendant vertex of $P_d$ and $k$ pendant vertices adjacent to its other pendant vertex. $T(\ell,k,d)\in \mathfrak{T}_n(d+1)$. 

\begin{proposition}[\cite{Wang}, Theorem 2.10]\label{Caterpillar max}
Among all caterpillars on $n$ vertices and diameter $d$, $T(\lfloor\frac{n-d+1}{2}\rfloor, \lceil\frac{n-d+1}{2}\rceil, d-1)$ uniquely maximizes the Wiener index. Further,
{\scriptsize \begin{equation*}
W\left(T\left(\left\lfloor\frac{n-d+1}{2}\right\rfloor,\left\lceil \frac{n-d+1}{2}\right\rceil,d-1\right)\right)=\begin{cases}{d \choose 3}+\frac{(n-d+1)^2}{4}(d+2)+\frac{n-d+1}{2}[(d-1)^2+d-3] &\mbox{if n-d is odd}, \\ 
																			 {d \choose 3}+\frac{(n-d+1)^2-1}{4}(d+2)+\frac{n-d+1}{2}[(d-1)^2+d-3]+1 &\mbox{if n-d is even}.\end{cases}
\end{equation*}}
\end{proposition}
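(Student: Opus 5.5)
The plan is to reduce the problem to the two-parameter family $T(\ell,k,d-1)$ by a pendant-moving argument, and then optimise over the split $\ell+k$.

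\textbf{Setup.} Let $T$ be a caterpillar on $n$ vertices with diameter $d$, and let $v_0v_1\cdots v_d$ be a longest path in $T$. Every vertex off this path is a pendant vertex adjacent to one of $v_1,\dots,v_{d-1}$. It cannot be adjacent to $v_0$ or $v_d$, since those are ends of a diametral path. So $T$ is determined by how the $m=n-d-1$ extra pendant vertices are distributed among $v_1,\dots,v_{d-1}$.

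\textbf{Step 1: no extra pendant sits at $v_i$ with $2\le i\le d-2$.} Suppose a pendant vertex $x$ is adjacent to such a $v_i$, and put $T'=T-x$. Note that $T'$ still has diameter $d$. By Lemma~\ref{count} applied at the cut vertex $v_i$,
$$W(T)=W(T')+D_{T'}(v_i)+(n-1).$$
Hence among all trees obtained by reattaching $x$ to some $v_j$ with $1\le j\le d-1$, all of which are caterpillars of diameter $d$, the Wiener index is governed by $j\mapsto D_{T'}(v_j)$.

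In a tree the distance function is strictly convex. For $u\sim w\sim v$, write $n_u$ and $n_v$ for the orders of the components of $T'-w$ containing $u$ and $v$. Then
$$D(u)+D(v)-2D(w)=2\bigl(|V(T')|-n_u-n_v\bigr)>0.$$
Restricted to the path $v_1\cdots v_{d-1}$, the function $j\mapsto D_{T'}(v_j)$ is therefore a strictly convex sequence. Its maximum over $1\le j\le d-1$ is attained only at $j=1$ or $j=d-1$ (this is the path case of Proposition~\ref{convexity_trees}, applied to $-D$ read as a maximum). So moving $x$ to $v_1$ or $v_{d-1}$ strictly increases $W$. Consequently every maximiser has all extra pendants at $v_1$ and $v_{d-1}$, i.e.\ it is $T(\ell,k,d-1)$ with $\ell+k=n-d+1$ when the leaves $v_0,v_d$ are counted.

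\textbf{Step 2: optimise the split.} Fix $s=\ell+k$. Distances among the path vertices, and between leaves and the path, contribute an amount depending only on $s$ and $d$. The remaining contributions are:
\begin{itemize}
\item pairs of leaves on the same side, each at distance $2$, giving $2\binom{\ell}{2}+2\binom{k}{2}=s^2-s-2\ell k$;
\item pairs of leaves on opposite sides, each at distance $d$, giving $d\,\ell k$.
\end{itemize}
So $W=C(s,d)+(d-2)\ell k$. For $d\ge3$ this is uniquely maximised (up to swapping the ends) at $\ell=\lfloor s/2\rfloor$, $k=\lceil s/2\rceil$, which gives both existence and uniqueness.

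\textbf{Step 3: the closed formula.} Evaluate $W$ for the balanced tree directly:
\begin{itemize}
\item $W(P_{d-1})=\binom{d}{3}$ for the internal path;
\item the leaf-to-path distances, using $D_{P_{d-1}}$ of an end vertex plus one extra unit per path vertex, multiplied by $s$;
\item the leaf-to-leaf term computed above.
\end{itemize}
The two cases $s$ even or odd, i.e.\ $n-d$ odd or even, come from $\ell k=s^2/4$ versus $(s^2-1)/4$.

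The main obstacle is only bookkeeping in this last step: making the constant $C(s,d)$ match the stated expression $\frac{s}{2}[(d-1)^2+d-3]$. The structural argument is the convexity step.
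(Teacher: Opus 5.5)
Your proof is correct, including the closed formula.

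The paper gives no proof of this proposition; it is quoted from Wang et al. So there is no in-paper argument to match. Your Step~1 is, however, exactly the mechanism the paper uses in the proof of Theorem~\ref{Tree diameter max}. There, the tree is split at a cut vertex with Lemma~\ref{count}, and strict convexity of the distance function shows that relocating a branch strictly increases $W$. You apply this with the branch a single leaf and the candidate attachment points the spine $v_1\cdots v_{d-1}$. That reduces everything to the two-parameter family $T(\ell,k,d-1)$, and your identity $W=C(s,d)+(d-2)\ell k$ then settles existence and uniqueness at once. What this buys over the citation is a self-contained proof using only tools already in the paper.

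Three small repairs are needed.

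\begin{itemize}
\item The parenthetical invoking Proposition~\ref{convexity_trees} ``applied to $-D$ read as a maximum'' is wrong as stated. That proposition is about minima of strictly (quasi)convex functions, and $-D$ is concave. Apply it instead to $D_{T'}$ itself on the path $v_1\cdots v_{d-1}$, where it is strictly increasing away from its minimiser. Alternatively, note directly that $2D(v_j)<D(v_{j-1})+D(v_{j+1})$ forces $D(v_i)<\max\{D(v_1),D(v_{d-1})\}$ for $2\le i\le d-2$.
\item Two facts deserve a line of justification. First, the internal vertices of a longest path of a caterpillar are exactly its spine, so every off-path vertex is a leaf at some $v_j$ with $1\le j\le d-1$. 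Second, every reattachment keeps diameter $d$, because $e_{T'}(v_j)=\max\{j,d-j\}\le d-1$.
\item The Step~3 bookkeeping does close, so it is not really an obstacle. Each leaf contributes $\sum_{j=1}^{d-1}j=\frac{d(d-1)}{2}$ to the spine, so $C(s,d)=\binom{d}{3}+s\frac{d(d-1)}{2}+s^2-s$.
\begin{itemize}
\item For $s$ even, $\ell k=\frac{s^2}{4}$ gives $\binom{d}{3}+\frac{s^2}{4}(d+2)+\frac{s}{2}(d^2-d-2)$. This is the stated formula, since $(d-1)^2+d-3=d^2-d-2$.
\item For $s$ odd, $\ell k=\frac{s^2-1}{4}$ gives the same expression plus an extra $-\frac{d-2}{4}=-\frac{d+2}{4}+1$. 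That is exactly the ``$+1$'' case.
\end{itemize}
\end{itemize}
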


$P_n$ is the only tree in $\mathfrak{T}_n(n-1)$. Moreover, any tree with diameter $n-2$ is a caterpillar, and therefore by Proposition \ref{Caterpillar max}, the tree with the maximum Wiener index in $\mathfrak{T}_n(n-2)$ is determined. Wang \textit{et al.}~\cite{Wang} further showed that the tree $T(2,2,n-4)$ uniquely maximizes the Wiener index in $\mathfrak{T}_n(n-3)$. Also in \cite {Wang} (Figure 6), the authors have shown some examples of graphs in $\mathfrak{T}_{10}(5)$, $\mathfrak{T}_{11}(5)$ and $\mathfrak{T}_{11}(6)$ which suggest that there can be more than one non-isomorphic trees  attaining the maximum in $\mathfrak{T}_n(d)$, $5\leq d\leq n-4$, which make it difficult to characterise them. We present a subfamily of $\mathfrak{T}_n(d)$ that contains all the potential candidates for the maximal Wiener index in $\mathfrak{T}_n(d)$. 
  
Let $T\in \mathfrak{T}_n(d)$ and $P: v_0v_1\cdots v_d$ be a path of length $d$ in $T$, and let $\bar{T}$ be the forest $T\setminus E(P)$. For $1\leq i\leq d-1$ and $j\geq 0$ define $S_{ij}(T)=\{v\in V(T): d_{\bar{T}}(v_i,v)=j\}$. As $diam(T)=d$, it follows that $j\leq \min\{i,d-i\}$. With this meaning of $S_{ij}(T)$, the generic structure of a tree in $\mathfrak{T}_n(d)$ is presented in Figure \ref{Tree structure}. Note that $S_{i0}(T)=\{v_i\}$ and $S_{ij}(T)$ can be empty for $j\geq 1$. It follows that $S_{ij}(T)=\emptyset\implies S_{i(j+1)}(T)=\emptyset$ for $1\leq i\leq d-1, 1\leq j\leq \min\{i, d-i\}-1$, and $S_{ij}(T)\neq\emptyset\implies S_{i(j-1)}(T)\neq\emptyset$ for $1\leq i\leq d-1, 2\leq j\leq \min\{i, d-i\}$.
\begin{figure}[h!]
\begin{center}
\begin{tikzpicture}
\filldraw (0,0) node [above]{$v_0$} circle[radius=.5mm]--(2,0)node [above]{$v_1$} circle[radius=.5mm]--(4,0)node [above]{$v_2$} circle[radius=.5mm](7,0)node [above]{$v_i$} circle[radius=.5mm](10,0)node [above]{$v_{d-2}$} circle[radius=.5mm]--(12,0)node [above]{$v_{d-1}$} circle[radius=.5mm]--(14,0)node [above]{$v_d$} circle[radius=.5mm];
\draw (4,0)[dash pattern= on 1pt off 1 pt]--(7,0)--(10,0);
\draw (2,0)--(1.5,-1)--(2.5,-1)--(2,0) (4,0)--(3.5,-1)--(4.5,-1)--(4,0) (4,-1)--(3.5,-2)--(4.5,-2)--(4,-1) (7,0)--(6.5,-1)--(7.5,-1)--(7,0) (7,-1)--(6.5,-2)--(7.5,-2)--(7,-1) (7,-3.5)--(6.5,-4.5)--(7.5,-4.5)--(7,-3.5) (10,0)--(9.5,-1)--(10.5,-1)--(10,0) (10,-1)--(9.5,-2)--(10.5,-2)--(10,-1) (12,0)--(11.5,-1)--(12.5,-1)--(12,0) ;
\draw (7,-2)[dash pattern=on 2pt off 2pt]--(7,-3.5);
\draw (2,-.7) node[scale =.5]{$S_{11}(T)$} (4,-.7) node[scale =.5]{$S_{21}(T)$} (4,-1.7) node[scale =.5]{$S_{22}(T)$} (7,-.7) node[scale =.5]{$S_{i1}(T)$} (7,-1.7) node[scale =.5]{$S_{i2}(T)$} (7,-4.2) node[scale =.5]{$S_{it}(T)$} (10,-.7) node [scale =.36]{$S_{(d-2)1}(T)$}(7,-4.7) node[scale =.5]{$t=\min\{i,d-i\}$} (10,-1.7) node [scale =.36]{$S_{(d-2)2}(T)$} (12,-.7) node [scale =.36]{$S_{(d-1)1}(T)$};
\end{tikzpicture}
\caption{Structure of a tree $T$ of diameter $d$}\label{Tree structure}
\end{center} 
\end{figure}

\begin{lemma}[\cite {Pandey 2}, Theorem 3.6]\label{distance is convex}
Let $T$ be a tree on at least $3$ vertices. Then the distance function $D_T:V(T)\rightarrow (-\infty, \infty)$ is strictly convex.
\end{lemma}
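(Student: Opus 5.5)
The plan is to prove the inequality $2D_T(w)<D_T(u)+D_T(v)$ directly for every path $u\sim w\sim v$ of three distinct vertices. To do this I would first compute how $D_T$ changes across a single edge, and then add up the changes along the two edges $uw$ and $wv$.

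\emph{Step 1 (edge difference formula).} Let $xy\in E(T)$. Deleting the edge $xy$ splits $T$ into two components $T_x\ni x$ and $T_y\ni y$, with $|V(T_x)|+|V(T_y)|=n$. Every vertex $z\in V(T_x)$ satisfies $d(y,z)=d(x,z)+1$, because the unique $y$--$z$ path passes through $x$. Symmetrically, every $z\in V(T_y)$ satisfies $d(x,z)=d(y,z)+1$. Summing over all $z$ gives
\begin{equation*}
D_T(x)-D_T(y)=|V(T_y)|-|V(T_x)|=n-2|V(T_x)|.
\end{equation*}

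\emph{Step 2 (apply to the two edges at $w$).} Let $A$ be the vertex set of the component of $T-uw$ that contains $u$. Let $B$ be the vertex set of the component of $T-wv$ that contains $v$. Let $C=V(T)\setminus(A\cup B)$.
\begin{itemize}
\item Since $T$ is a tree and $u,v$ are distinct neighbours of $w$, the sets $A$ and $B$ are disjoint.
\item Neither $A$ nor $B$ contains $w$, so $w\in C$ and hence $|C|\geq 1$.
\end{itemize}
Step 1 then gives $D_T(u)-D_T(w)=n-2|A|$ and $D_T(v)-D_T(w)=n-2|B|$. Adding these two identities,
\begin{equation*}
D_T(u)+D_T(v)-2D_T(w)=2n-2|A|-2|B|=2|C|\geq 2>0.
\end{equation*}
This is exactly strict convexity in the sense of Definition \ref{convexfunction_tree}.

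The hypothesis $|V(T)|\geq 3$ is needed only so that paths $u\sim w\sim v$ exist and the definition is non-vacuous. The only point requiring care is the disjointness of $A$ and $B$ and the membership $w\in C$. Both follow from uniqueness of paths in a tree: a vertex lying in both $A$ and $B$ would give two distinct paths to $w$. So I do not expect any real obstacle. As a consequence, Proposition \ref{convexity_trees} applies to $D_T$.
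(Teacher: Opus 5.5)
Your proof is correct. The edge identity $D_T(x)-D_T(y)=n-2|V(T_x)|$ holds, and $A$, $B$ are disjoint and miss $w$ by uniqueness of paths, so $D_T(u)+D_T(v)-2D_T(w)=2|C|\geq 2$.

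The paper gives no argument of its own for Lemma~\ref{distance is convex}. It simply imports the result from \cite{Pandey 2} (Theorem 3.6), so there is no in-paper proof to compare against. Citing buys brevity. Your branch-counting route gives a self-contained proof that is also quantitative: the convexity gap is exactly twice the order of the component of $T-\{uw,wv\}$ containing $w$.

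Your Step~1 alone also yields the facts the paper later draws from Proposition~\ref{convexity_trees}. For an edge $xy$, $D_T(y)>D_T(x)$ holds exactly when the component of $T-xy$ containing $x$ has more than $n/2$ vertices. Along a path $p_0p_1\cdots p_k$, the component of $T-p_ip_{i+1}$ containing $p_i$ strictly contains the component of $T-p_{i-1}p_i$ containing $p_{i-1}$. These two facts quickly give Lemma~\ref{Median of a tree}. They also give the strict increase of $D_T$ along paths leaving the median, which is used in Case~I of the proof of Theorem~\ref{Tree diameter max}.
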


The {\it median} of $T$ is the set of vertices with minimum distance and is denoted by $M(T)$. The following is known about the median of a tree, which also follows from Lemma \ref{distance is convex}.

\begin{lemma}[\cite{Zelinka}, Theorem 2] \label{Median of a tree}
The median of a tree contains either one vertex or two adjacent vertices.
\end{lemma}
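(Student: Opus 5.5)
The statement to prove is Lemma \ref{Median of a tree}: the median of a tree consists of either a single vertex or two adjacent vertices. I would derive it directly from Lemma \ref{distance is convex} together with Proposition \ref{convexity_trees}.

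The trivial cases come first. If $|V(T)|=1$, the median is the unique vertex. If $|V(T)|=2$, both vertices have distance $1$, so the median is the pair of adjacent vertices.

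Now assume $|V(T)|\geq 3$. By Lemma \ref{distance is convex}, the distance function $D_T$ is strictly convex on $V(T)$. Proposition \ref{convexity_trees} then states that a strictly convex function on a tree attains its minimum either at a unique vertex or at two adjacent vertices. Since $M(T)$ is by definition the set of minimisers of $D_T$, the conclusion follows immediately.

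If I wanted to avoid quoting Proposition \ref{convexity_trees}, I would argue directly as follows. Suppose $u\neq v$ both minimise $D_T$ and are not adjacent. Take the unique $u$–$v$ path $u=w_0w_1\cdots w_k=v$ with $k\geq 2$, and apply strict convexity at each interior vertex. The sequence $D_T(w_i)$ then has strictly increasing differences. Since $D_T(w_0)=D_T(w_k)$, some interior vertex $w_i$ satisfies $D_T(w_i)<D_T(u)$, which contradicts minimality. So any two medians are adjacent. Since a tree has no triangle, at most two vertices can be pairwise adjacent, and hence the median has at most two vertices.

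There is no real obstacle here. The only point needing care is the discrete-convexity step: strict convexity on a path forces the interior values to lie strictly below the endpoint values when the endpoint values are equal. This follows by summing the strictly increasing second differences along the path.
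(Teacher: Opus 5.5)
Your proposal is correct and follows the paper's route: the paper cites Zelinka and remarks that the lemma also follows from Lemma~\ref{distance is convex}, that is, from strict convexity of $D_T$ combined with Proposition~\ref{convexity_trees}, which is exactly your main argument. Your direct alternative is also sound: two non-adjacent minimisers with equal values force the first difference along the path between them to be negative, giving a smaller interior value, and triangle-freeness then caps the median at two vertices. That argument just reproves the relevant part of Proposition~\ref{convexity_trees}.
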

A {\it branch} of a tree at a vertex $v$ is a maximal subtree containing $v$ as a pendant vertex. The next theorem gives a necessary condition for a tree to be of maximum Wiener index in $\mathfrak{T}_n(d)$.
\begin{theorem}\label{Tree diameter max}
Let $T\in \mathfrak{T}_n(d)$ such that $W(T_0)=\max\{W(T): T\in \mathfrak{T}_n(d)\}$. Then every vertex of $T_0$ lies on a path of length $d$.
\end{theorem}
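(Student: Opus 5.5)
The plan is to argue by contradiction: show that if some vertex of a tree in $\mathfrak{T}_n(d)$ lies on no path of length $d$, then moving one suitable pendant vertex gives a tree that is still in $\mathfrak{T}_n(d)$ and has strictly larger Wiener index.

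\emph{Step 1: reduce to a bad leaf.} Let $U$ be the union of all paths of length $d$ in $T_0$. Every such path passes through the center of $T_0$ (one vertex or one edge), so $U$ induces a subtree. Suppose some vertex $w\notin U$. Take the component of $T_0-V(U)$ containing $w$. It is attached to $U$ by a single edge, so it contains a vertex $u$ of degree one in $T_0$. Since $u\notin U$, the leaf $u$ lies on no diametral path. Fix a diametral path $P: v_0v_1\cdots v_d$, let $x$ be the neighbour of $u$, and let $v_i$ be the vertex of $P$ closest to $u$. Every vertex of the branch containing $u$ is within distance $\min\{i,d-i\}$ of $v_i$. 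Because $u$ is not an endpoint of a diametral path, in fact $d(v_i,u)<\min\{i,d-i\}$.

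\emph{Step 2: the exchange.} Put $T'=T_0-u$ and form $T''$ by attaching $u$ as a leaf at $v_1$ (or at $v_{d-1}$). The new leaf is at distance at most $d$ from every vertex, and $P$ is unaffected, so $T''\in\mathfrak{T}_n(d)$. By Lemma \ref{count}, applied at the cut vertex carrying $u$, or directly,
\[
W(T'')-W(T_0)=D_{T'}(v_1)-D_{T'}(x),
\]
and the analogous formula holds for $v_{d-1}$. It therefore suffices to show $D_{T'}(x)<\max\{D_{T'}(v_1),D_{T'}(v_{d-1})\}$, which contradicts the maximality of $W(T_0)$.

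\emph{Step 3: the comparison, which I expect to be the main obstacle.} By Lemma \ref{distance is convex} and Proposition \ref{convexity_trees}, $D_{T'}$ is strictly increasing along paths leaving the median $M(T')$. I split into two cases according to where the median lies.

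If $M(T')$ does not meet the branch at $v_i$ containing $x$ (apart from $v_i$ itself), then $D_{T'}$ increases along the path from the median through $v_i$ to $x$. In this case I would compare $x$ with the vertex $v_{i-j}$ or $v_{i+j}$, where $j=d(v_i,x)$, choosing the one lying on the side away from the median. Along $P$ this vertex is at least as far from the median as $x$ is. Using the edge identity $D(a)-D(b)=n_b-n_a$ (with $n_a$ the number of vertices closer to $a$ than to $b$) on the two equal-length paths, the bound $j<\min\{i,d-i\}$ should give $D_{T'}(x)\le D_{T'}(v_{i\mp j})$. Then strict increase along $P$ towards $v_1$ or $v_{d-1}$ finishes this case.

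If instead the median lies inside the branch at $x$'s side of $v_i$, then at least half of the vertices lie in that branch. Then $D_{T'}(v_1)-D_{T'}(v_i)$ is at least $(i-1)\cdot n/2$, roughly. On the other hand, $D_{T'}(x)-D_{T'}(v_i)\ge -j\cdot n$, and more precisely it is bounded using the edge identity together with $j\le i-1$ and $j\le d-i-1$. Combining these should again give the strict inequality.

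Making these edge-counting estimates precise, in particular the equal-length path comparison in the first case, is where I expect the real work to lie. If it resists, I would try instead to choose the relocation target as the vertex of $\{v_1,v_{d-1}\}$ farther from $M(T')$ and argue by strict convexity alone.
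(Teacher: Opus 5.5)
\textbf{Verdict: there is a genuine gap.} Your Steps 1 and 2 are sound: $T''\in\mathfrak{T}_n(d)$, $x\notin\{v_1,v_{d-1}\}$, and $W(T'')-W(T_0)=D_{T'}(v_1)-D_{T'}(x)$. The gap is in Step 3. The inequality $D_{T'}(x)<\max\{D_{T'}(v_1),D_{T'}(v_{d-1})\}$ is false in general, so relocating a single leaf cannot prove the theorem.

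\textbf{Counterexample.} Take $d=6$ and let $T_0$ be the path $v_0v_1\cdots v_6$ together with a leaf $a$ at $v_1$, a leaf $b$ at $v_5$, and a pendant path $v_3wu$. Then $e(u)=5$, so $u$ and $w$ lie on no path of length $6$, and $x=w$. In $T'=T_0-u$ (10 vertices) one gets $D_{T'}(w)=27$ but $D_{T'}(v_1)=D_{T'}(v_5)=25$. So your exchange lowers $W$ by $2$: $W(T_0)=172$ and $W(T'')=170$. Worse, the only vertices of $T'$ where a leaf can be attached without raising the diameter are $v_1,\dots,v_5,w$. Their $D_{T'}$-values are $25,21,19,21,25,27$, so no relocation of $u$ alone increases $W$, and your fallback via convexity cannot help either.

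\textbf{Where your argument breaks.} It is the equal-length comparison in your first case. Here $M(T')=\{v_3\}=\{v_i\}$ and $j=1$, yet $D_{T'}(w)-D_{T'}(v_3)=9-1=8$ while $D_{T'}(v_2)-D_{T'}(v_3)=6-4=2$. By the edge identity, $D$ grows quickly when you step into a light branch and slowly along a heavy one. Equal graph distance from the median therefore gives no comparison. A bad leaf at the end of a thin side branch can have larger distance than $v_1$ and $v_{d-1}$ whenever the mass of the tree sits near both ends of $P$.

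\textbf{The missing idea: move the whole branch containing the bad vertex, not just one leaf.} The paper does this as follows.
\begin{itemize}
\item Take $v'$, the nearest vertex of degree at least $3$ on the way from the bad vertex towards $P$.
\item Write $T_0=T_{01}\cup T_{02}$ with $V(T_{01})\cap V(T_{02})=\{v'\}$, where $T_{01}$ is the branch at $v'$ containing the bad vertex.
\item Reattach $T_{01}$ at a vertex $y$ chosen so that the path from $M(T_{02})$ to $y$ runs through $v'$. The choices are $v_\ell$ or $v_{d-\ell}$, where $\ell<g$ is the height of $T_{01}$ at $v'$, or a neighbour $v''$ of $v'$ farther from $P$.
\item By Lemma~\ref{count} the change in $W$ is $(|V(T_{01})|-1)(D_{T_{02}}(y)-D_{T_{02}}(v'))$. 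This is positive by Lemma~\ref{distance is convex} and Proposition~\ref{convexity_trees}.
\item The height bound keeps the diameter equal to $d$.
\end{itemize}
Detaching the entire branch is what makes a pure convexity argument available. The comparison is now made inside $T_{02}$, along a single path leaving its median, so the light-branch effect that defeats your comparison disappears. In the example, moving $\{w,u\}$ from $v_3$ to $v_2$ changes $W$ by $2(D_{T_{02}}(v_2)-D_{T_{02}}(v_3))=2(19-18)=2>0$.
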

\begin{proof}
The result is obvious for $1\leq d\leq 3$. So we assume that $d\geq 4$. Suppose $T_0$ has a vertex $v$ which does not lie on any path of length $d$.  Let $P:v_0v_1\cdots v_d$ be a longest path in $T_0$. Then $v\in S_{gh}(T_0)$ for some $2\leq g\leq d-2$ and $1\leq h< \min\{g,d-g\}$. Without loss of generality, assume that $ \min\{g,d-g\}=g$. Let $P_k$ be a longest path containing $v$. Then one pendant vertex of $P_k$ must belong to $S_{gt}(T_0)$, where $h\leq t< g$. Let $v'$ be the vertex different from $v$, which is nearest to $v$ on the path joining $v$ and $v_g$, that has degree at least $3$. Then $v'\in S_{gh'}(T_0)$ for some $0\leq h'<h$. Let $T_{01}$ be the branch of $T_0$ at $v'$ that contains $v$, and take $T_{02}$ to be the subtree induced by $(T_0\setminus T_{01})\cup \{v'\}$. Then $T_0\cong T_{01}\cup T_{02}$ with $V(T_{01})\cap V(T_{02})=\{v'\}$. Let $\ell$ be the length of a longest path containing $v'$ (and $v$) in $T_{01}$. Cleary $\ell<g$, as $v$ does not lie on any path of length $d$. Also by Lemma \ref{Median of a tree}, $M(T_{02})$ consists of either one vertex or two adjacent vertices. We consider the following two cases.\\

\noindent{\bf Case I:}  All vertices in $M(T_{02})$ lie on a branch at $v'$ containing a vertex from $S_{g(h'+1)}(T_{02})$ adjacent to $v'$.\\

Construct a new graph $T_0'$, from $T_{01}$ and $T_{02}$ by identifying the vertex $v'$ of $T_{01}$ with the vertex $v_{\ell}$ (on $P$) of $T_{02}$ i.e. $T_0'\cong T_{01}\cup T_{02}$ such that $V(T_{01})\cap V(T_{02})=\{v_\ell\}$.  Then $T_0'\in \mathfrak{T}_n(d)$ and $v$ lies on a longest path of $T_0'$. By  Lemma \ref{distance is convex}, the distance function $D_{T_{02}}:V(T_{02})\rightarrow (-\infty, \infty)$ is strictly convex.  As $M(T_{02})$ lies on a branch at $v'$ that contains a vertex from $S_{g(h'+1)}(T_{02})$ adjacent to $v'$, by Proposition \ref {convexity_trees} it follows that $D_{T_{02}}(v_\ell)>D_{T_{02}}(v')$. Also note that $D_{T_{01}}(v_\ell)$ and $D_{T_{01}}(v')$ are the same when considered part of $T_0'$ and $T_0$, respectively. Now by Lemma \ref{count},
\begin{align*}
W(T_0')&= W(T_{01})+W(T_{02})+(|V(T_{02})|-1)D_{T_{01}}(v_\ell)+(|V(T_{01})|-1)D_{T_{02}}(v_\ell)\\
            &>W(T_{01})+W(T_{02})+(|V(T_{02})|-1)D_{T_{01}}(v')+(|V(T_{01})|-1)D_{T_{02}}(v')\\
            &=W(T_0),
\end{align*}
which contradicts the maximality of $T_0$.\\

\noindent {\bf Case II:} At least one vertex in $M(T_{02})$ does not lie on any branch at $v'$ that contains a vertex from $S_{g(h'+1)}(T_{02})$ adjacent to $v'$.\\

 First suppose $v'=v_g$. Let $T_{02}(v',\ell)$ be the branch of $T_{02}$ at $v'$ containing $v_\ell$. If $V(T_{02}(v',\ell))-v'$ does not intersect $M(T_{02})$, construct $T_0'$ by identifying $v'$ of $T_{01}$ with $v_\ell$ of $T_{02}$. If $V(T_{02}(v',\ell))-v'$ intersects $M(T_{02})$, construct $T_0'$ by identifying $v'$ of $T_{01}$ with $v_{d-\ell}$ of $T_{02}$. Then using a similar argument given in Case I, it follows that $W(T_0')> W(T_0)$ which gives a contradiction. \\

Now suppose $v'$ is different from $v_g$.  As $deg(v')\geq 3$ in $T_0$, there exists a vertex $v''$ adjacent to $v'$ in $S_{g(h'+1)}(T_{02})$. Construct a tree $T_0{''}$ from $T_{01}$ and $T_{02}$ by identifying the vertex $v'$ of $T_{01}$ with the vertex $v''$ of $T_{02}$. Then $T_0{''}\in \mathfrak{T}_n(d)$ and $v$ lies on a path of length $k+1$ in $T_0{''}$. From Proposition \ref{convexity_trees}, it follows that $D_{T_{02}}(v'')>D_{T_{02}}(v')$. Now by comparing $W(T_0{''})$ and $W(T_0)$ as in Case I, we get $W(T_0{''})>W(T_0)$, which contradicts the maximality of $T_0$. 
\end{proof}

Theorem \ref{Tree diameter max} helps in deciding whether a tree can be of maximal Wiener index in $\mathfrak{T}_n(d)$ or not. It says that if $T_0$ has maximum Wiener index in $\mathfrak{T}_n(d)$, then $S_{i1}(T_0)\neq \emptyset \implies S_{it}(T_0)\neq \emptyset \;\; \forall i\in \{1,2,\ldots d-1\}$, where $t=\min \{i,d-i\}$.  An example is shown below.

\begin{example}
The trees $T_1$, $T_2$, $T_3$ and $T_4$ of Figure $\ref{Trees in 12-7}$ are members of $\mathfrak{T}_{12}(7)$. $S_{41}(T_1)\neq\emptyset$, but $S_{42}(T_1)=S_{43}(T_1)=\emptyset$. In other words $c_1$ and $d_1$ do not lie on any longest path of $T_1$. Therefore by Theorem \ref{Tree diameter max}, $T_1$ can not have the maximum Wiener index in $\mathfrak{T}_{12}(7)$. Similarly $T_2$ also can not have the maximum Wiener index in $\mathfrak{T}_{12}(7)$ as $S_{51}(T_2)\neq \emptyset$ but $S_{52}(T_2)=\emptyset$. But Theorem  \ref{Tree diameter max} is not helpful in deciding whether $T_3$ or $T_4$ can be of maximal Wiener index in $\mathfrak{T}_{12}(7)$ or not. The Wiener indices of these trees are: $W(T_1)=213, W(T_2)=218, W(T_3)= 236$ and $W(T_4)=230$.  \\
\begin{figure}[h!]
\begin{center}
\begin{tikzpicture}[scale =.8]
\filldraw (0,0)node[scale=.8][above]{$v_0$} circle[radius=.5mm]--(1,0)node[scale=.8][above]{$v_1$} circle[radius=.5mm]--(2,0)node[scale=.8][above]{$v_2$} circle[radius=.5mm]--(3,0)node[scale=.8][above]{$v_3$} circle[radius=.5mm]--(4,0)node[scale=.8][above]{$v_4$} circle[radius=.5mm]--(5,0) node [scale=.8][above]{$v_5$}circle[radius=.5mm]--(6,0)node[scale=.8][above]{$v_6$} circle[radius=.5mm]--(7,0)node[scale=.8][above]{$v_7$}circle [radius=.5mm] circle[radius=.5mm] (2,-1.8) node[scale=.8][right]{$b_1$}circle[radius=.5mm]--(2,-.9) node[scale=.8][right]{$a_1$}circle[radius=.5mm]--(2,0) circle[radius=.5mm] (3.5,-.9) node[scale=.8][below]{$c_1$} circle[radius=.5mm]--(4,0) circle[radius=.5mm]--(4.5,-.9) node[scale=.8][below]{$d_1$} circle[radius=.5mm];
\draw (3.5,-2.4)node[scale=.8]{$T_1$};
\end{tikzpicture}
\hskip .5cm
\begin{tikzpicture}[scale= .8]
\filldraw (0,0)node[scale=.8][above]{$v_0$}  circle[radius=.5mm]--(1,0)node[scale=.8][above]{$v_1$}  circle[radius=.5mm]--(2,0) node[scale=.8][above]{$v_2$} circle[radius=.5mm]--(3,0)node[scale=.8][above]{$v_3$}  circle[radius=.5mm]--(4,0)node[scale=.8][above]{$v_4$}  circle[radius=.5mm]--(5,0)node[scale=.8][above]{$v_5$}  circle[radius=.5mm]--(6,0)node[scale=.8][above]{$v_6$}  circle[radius=.5mm] --(7,0)node[scale=.8][above]{$v_7$} circle [radius=.5mm] (2,-1.8)node[scale=.8][right]{$b_2$}  circle[radius=.5mm]--(2,-.9)node[scale=.8][right]{$a_2$} circle[radius=.5mm]--(2,0) circle[radius=.5mm] (4,-.9)node[scale=.8][below]{$c_2$}  circle[radius=.5mm]--(4,0) (5,0)--(5,-.9)node[scale=.8][below]{$d_2$}  circle[radius=.5mm];
\draw (3.5,-2.4)node[scale=.8]{$T_2$};
\end{tikzpicture}
\vskip .5cm
\begin{tikzpicture}[scale =.8]
\filldraw (0,0)node[scale=.8][above]{$v_0$}  circle[radius=.5mm]--(1,0)node[scale=.8][above]{$v_1$}  circle[radius=.5mm]--(2,0)node[scale=.8][above]{$v_2$}  circle[radius=.5mm]--(3,0)node[scale=.8][above]{$v_3$}  circle[radius=.5mm]--(4,0)node[scale=.8][above]{$v_4$}  circle[radius=.5mm]--(5,0)node[scale=.8][above]{$v_5$}  circle[radius=.5mm]--(6,0)node[scale=.8][above]{$v_6$}  circle[radius=.5mm] --(7,0)node[scale=.8][above]{$v_7$} circle [radius=.5mm] (0.2,-.8)node[scale=.8][below]{$b_3$} circle[radius=.5mm] (.2,.8)node[scale=.8][above]{$a_3$} circle[radius=.5mm](6.8,-.8)node[scale=.8][below]{$d_3$}  circle[radius=.5mm] (6.8,.8)node[scale=.8][above]{$c_3$}  circle[radius=.5mm];
\draw(6.8,-.8) --(6,0)--(6.8,.8) (0.2,-.8)--(1,0)--(.2,.8);
\draw (3.5,-2.4)node[scale=.8]{$T_3$};
\end{tikzpicture}\hskip .5cm
\begin{tikzpicture}[scale =.8]
\filldraw (0,0)node[scale=.8][above]{$v_0$}  circle[radius=.5mm]--(1,0)node[scale=.8][above]{$v_1$}  circle[radius=.5mm]--(2,0)node[scale=.8][above]{$v_2$}  circle[radius=.5mm]--(3,0)node[scale=.8][above]{$v_3$}  circle[radius=.5mm]--(4,0)node[scale=.8][above]{$v_4$}  circle[radius=.5mm]--(5,0)node[scale=.8][above]{$v_5$} circle[radius=.5mm]--(6,0) node[scale=.8][above]{$v_6$} circle[radius=.5mm] --(7,0) node[scale=.8][above]{$v_7$} circle [radius=.5mm] (2,-1.8)node[scale=.8][right]{$b_4$}  circle[radius=.5mm]--(2,-.9)node[scale=.8][right]{$a_4$} circle[radius=.5mm]--(2,0) circle[radius=.5mm] (5,-1.8)node[scale=.8][right]{$d_4$}  circle[radius=.5mm]--(5,-.9)node[scale=.8][right]{$c_4$}  (5,0)--(5,-.9) circle[radius=.5mm];
\draw (3.5,-2.4)node[scale=.8]{$T_4$};
\end{tikzpicture}
\caption {Some trees in $\mathfrak{T}_{12}(7)$}\label{Trees in 12-7}
\end{center}
\end{figure}
\end{example}

Let $T\in \mathfrak{T}_n(d)$ be a tree which contains a vertex that does not belong to any of its longest path. The proof of Theorem \ref{Tree diameter max} describes a  process to construct a new tree $T'\in \mathfrak{T}_n(d)$ from $T$ such that every vertex of $T'$ lies on one of its longest paths and $W(T)<W(T')$. Now suppose we choose two arbitrary trees  $T_0$ and $T_0'$ from $\mathfrak{T}_n(d)$ such that $T_0$ contains a vertex which does not lie on any of its longest paths, and every vertex of $T_0'$ lies on one of its longest paths. A natural question that arises after Theorem \ref{Tree diameter max} is: Is it always true that $W(T_0)< W(T_0')$? The answer is no.  The trees $T_5$ and $T_6$ of Figure \ref{Answer to natural question} give a counter example. $W(T_5)=65>62=W(T_6)$.

\begin{figure}[h!]
\begin{center}
\begin{tikzpicture}[scale=.8]
\filldraw (0,-.5) circle [radius=.5mm]--(1,0) circle [radius=.5mm]--(2,0) circle [radius=.5mm]--(3,0) circle [radius=.5mm]--(4,-.5) circle [radius=.5mm] (0,.5) circle [radius=.5mm]  (4,.5) circle [radius=.5mm]   (2,.8) circle [radius=.5mm]; 
 \draw (0,.5)--(1,0) (3,0)--(4,.5) (2,0)--(2,.8);
\draw (2,-1) node {$T_5$};
\end{tikzpicture}
\hskip 2cm
\begin{tikzpicture}[scale=.8]
\filldraw (1,0) circle [radius=.5mm]--(2,0) circle [radius=.5mm]--(3,0) circle [radius=.5mm]--(4,0) circle [radius=.5mm] (0.2,-.3) circle [radius=.5mm] (0.4,-.8) circle [radius=.5mm] (0.2,.3) circle [radius=.5mm] (0.4,.8) circle [radius=.5mm];
 \draw (.2,-.3)--(1,0)--(0.2,.3) (0.4,-.8)--(1,0)--(0.4,.8);
 \draw (2,-1) node {$T_6$};
\end{tikzpicture}
\end{center}
\caption{Two trees in $\mathfrak{T}_{8}(4)$}\label{Answer to natural question}
\end{figure}

Using Theorem \ref{Tree diameter max}, we now characterise the trees with maximal Wiener indices in $\mathfrak{T}_n(n-4)$. If $n\leq 8$, then $n-4\leq 4$ and the maximal trees with diameter up to $4$ are known. So we consider $n\geq 9$. By Theorem \ref{Tree diameter max}, if $T$ is maximal in  $\mathfrak{T}_n(n-4)$, then all its pendant vertices lie on a path of length $n-4$. So all the potential maximal trees (up to isomorphism) can be drawn in the form of Figure \ref{Tree structure}. If a maximal tree is a caterpillar, then by Proposition \ref{Caterpillar max}, it must be $T(2,3,n-5)$. Hence, for $n\geq 10$ the potential maximal trees are precisely $T_7, T_8,T_9$ and $T_{10}$ in Figure \ref{maximal n,n-4}, and for $n=9$, they are precisely $T_7, T_8$ and $T_9$.
\begin{figure}[h!]
\begin{center}
\begin{tikzpicture}
\filldraw (0,0)node[above]{$v_0$} circle [radius=.5mm]--(1,0)node[above]{$v_1$}circle [radius=.5mm]--(2,0)node[above]{$v_2$}circle [radius=.5mm](4,0)node[above]{$v_{n-5}$}circle [radius=.5mm]--(5,0)node[above]{$v_{n-4}$}circle [radius=.5mm] (1,-.8)circle [radius=.5mm] (4.2,-.8)circle [radius=.5mm] (3.8,-.8)circle [radius=.5mm];
\draw (1,0)--(1,-.8) (3.8,-.8)--(4,0)--(4.2,-.8);
\draw(2,0)[dash pattern= on 1pt off 1pt]--(4,0) (2.5,-2) node {$T_7$};
\end{tikzpicture}
\hskip 1cm 
\begin{tikzpicture}
\filldraw (0,0)node[above]{$v_0$} circle [radius=.5mm]--(1,0)node[above]{$v_1$}circle [radius=.5mm]--(2,0)node[above]{$v_2$}circle [radius=.5mm](4,0)node[above]{$v_{n-5}$}circle [radius=.5mm]--(5,0)node[above]{$v_{n-4}$}circle [radius=.5mm] (1,-.8)circle [radius=.5mm] (2,-.8)circle [radius=.5mm] (2,-1.6)circle [radius=.5mm];
\draw (1,0)--(1,-.8) (2,0)--(2,-.8)--(2,-1.6);
\draw(2,0)[dash pattern= on 1pt off 1pt]--(4,0) (2.5,-2) node{$T_8$};
\end{tikzpicture}
\vskip .4cm 
\begin{tikzpicture}
\filldraw (0,0)node[above]{$v_0$} circle [radius=.5mm]--(1,0)node[above]{$v_1$}circle [radius=.5mm]--(2,0)node[above]{$v_2$}circle [radius=.5mm](4,0)node[above]{$v_{n-5}$}circle [radius=.5mm]--(5,0)node[above]{$v_{n-4}$}circle [radius=.5mm] (4,-.8)circle [radius=.5mm] (2,-.8)circle [radius=.5mm] (2,-1.6)circle [radius=.5mm];
\draw (4,0)--(4,-.8) (2,0)--(2,-.8)--(2,-1.6);
\draw(2,0)[dash pattern= on 1pt off 1pt]--(4,0) (2.5,-2.2) node{$T_9$};
\end{tikzpicture}
\hskip 1cm 
\begin{tikzpicture}
\filldraw (0,0)node[above]{$v_0$} circle [radius=.5mm]--(1,0)node[above]{$v_1$}circle [radius=.5mm]--(2,0)node[above]{$v_2$}circle [radius=.5mm]--(3,0)node[above]{$v_3$}circle [radius=.5mm] (5,0)node[above]{$v_{n-5}$}circle [radius=.5mm]--(6,0)node[above]{$v_{n-4}$}circle [radius=.5mm] (3,-.6)circle [radius=.5mm] (3,-1.2)circle [radius=.5mm] (3,-1.8)circle [radius=.5mm];
\draw  (3,0)--(3,-.6)--(3,-1.2)--(3,-1.8);
\draw(3,0)[dash pattern= on 1pt off 1pt]--(5,0) (2.5,-2.2) node{$T_{10}$};
\end{tikzpicture}
\caption {The potential candidates for maximal Wiener index in $\mathfrak{T}_{n}(n-4)$}\label{maximal n,n-4}
\end{center}
\end{figure} 
We calculate (using Lemma \ref{count}) the Wiener indices of the trees in Figure \ref{maximal n,n-4} as the following: $W(T_7)=\frac{n^3-25n+84}{6}$, $W(T_8)=\frac{n^3-43n+234}{6}$, $W(T_9)=\frac{n^3-31n+138}{6}$ and $W(T_{10})=\frac{n^3-55n+378}{6}$. Now the Wiener indices of these trees can easily be compared. If $n\geq 10$ we get $W(T_7)> W(T_i)$ for $i=8,9,10$. For $n=9$, $W(T_7)=W(T_9)=98$ and $W(T_8)=96$. Thus we conclude the following.
\begin{itemize}
\item Both the trees $T_7$ and $T_9$ maximize the Wiener index in $\mathfrak{T}_9(5)$.
\item If $n\geq 10$, $T_7$ uniquely maximizes the Wiener index in $\mathfrak{T}_n(n-4)$. 
\end{itemize}

Using the same idea the maximal trees in $\mathfrak{T}_n(n-5)$ can also be characterised. By Theorem \ref{Tree diameter max}, the trees $T_i$, $11\leq i\leq 20$ of Figure \ref{maximal n,n-5} are the only possibilities for trees with maximum Wiener index in $\mathfrak{T}_n(n-5)$. Here we take $n\geq 10$ as we are looking for trees with diameter at least $5$. We calulate the Wiener indices of $T_i$, $11\leq i\leq 20$ as the following: $W(T_{11})=\frac{n^3-37n+132}{6}$, $W(T_{12})=\frac{n^3-43n+186}{6}$, $W(T_{13})=\frac{n^3-49n+252}{6}$, $W(T_{14})=\frac{n^3-67n+414}{6}$, $W(T_{15})=\frac{n^3-67n+402}{6}$, $W(T_{16})=\frac{n^3-73n+456}{6}$, $W(T_{17})=\frac{n^3-49n+240}{6}$,  $W(T_{18})=\frac{n^3-61n+396}{6}$, $W(T_{19})=\frac{n^3-79n+594}{6}$, $W(T_{20})=\frac{n^3-97n+864}{6}$. Note that $T_{20}$ does not  exists for $n\in \{10,11,12\}$. Further $T_{18}$ and $T_{19}$ do not exist for $n=10$. For $n\geq 13$, each $T_i$, $11\leq i\leq 20$  exists. Now the Wiener indices of $T_i$ for $11\leq i\leq 20$ can be easily compared. If $n=10$, then $W(T_{11})\geq W(T_i)$ for $12\leq i \leq 17$ and the equality holds iff $i=13$. If $n=11$, $W(T_{11})\geq W(T_i)$ for $12\leq i \leq 19$ and the equality holds iff $i\in\{18,19\}$ ($T_{18}\cong T_{19}$ for $n=11$). If $n\geq 12$, then $W(T_{11})>W(T_i)$ for $12\leq i\leq 20$, provided they exist. So we conclude the following.
\begin{itemize}
\item Both the trees $T_{11}$ and $T_{13}$ maximize the Wiener index in $\mathfrak{T}_{10}(5)$.
\item Both the trees $T_{11}$ and $T_{18}$ maximize the Wiener index in $\mathfrak{T}_{11}(6)$.
\item For $n\geq 12$, $T_{11}$ uniquely maximizes the Wiener index in $\mathfrak{T}_n(n-5)$.
\end{itemize}

\begin{figure}[h!]
\begin{center}
\begin{tikzpicture}[scale=.8]
\filldraw (0,0)node[scale=.8][above]{$v_0$} circle [radius=.5mm]--(1,0)node[scale=.8][above]{$v_1$}circle [radius=.5mm]--(2,0)circle [radius=.5mm](4,0)node[scale=.8][above]{$v_{n-6}$}circle [radius=.5mm]--(5,0)node[scale=.8][above]{$v_{n-5}$}circle [radius=.5mm] (.8,-.8)circle [radius=.5mm] (1.2,-.8)circle [radius=.5mm] (4.2,-.8)circle [radius=.5mm] (3.8,-.8)circle [radius=.5mm];
\draw (.8,-.8)--(1,0)--(1.2,-.8) (3.8,-.8)--(4,0)--(4.2,-.8);
\draw(2,0)[dash pattern= on 1pt off 1pt]--(4,0) (2.5,-2) node {$T_{11}$};
\end{tikzpicture}
\hskip .5cm 
\begin{tikzpicture}[scale=.8]
\filldraw (0,0)node[scale=.8][above]{$v_0$} circle [radius=.5mm]--(1,0)node[scale=.8][above]{$v_1$}circle [radius=.5mm]--(2,0)node[scale=.8][above]{$v_2$}circle [radius=.5mm](4,0)node[scale=.8][above]{$v_{n-6}$}circle [radius=.5mm]--(5,0)node[scale=.8][above]{$v_{n-5}$}circle [radius=.5mm] (2,-.8)circle [radius=.5mm] (2,-1.6)circle [radius=.5mm] (4.2,-.8)circle [radius=.5mm] (3.8,-.8)circle [radius=.5mm];
\draw (2,0)--(2,-.8)--(2,-1.6) (3.8,-.8)--(4,0)--(4.2,-.8);
\draw(2,0)[dash pattern= on 1pt off 1pt]--(4,0) (2.5,-2) node {$T_{12}$};
\end{tikzpicture}
\hskip .5cm 
\begin{tikzpicture}[scale=.8]
\filldraw (0,0)node[scale=.8][above]{$v_0$} circle [radius=.5mm]--(1,0)node[scale=.8][above]{$v_1$}circle [radius=.5mm]--(2,0)node[scale=.8][above]{$v_2$}circle [radius=.5mm](4,0)node[scale=.8][above]{$v_{n-6}$}circle [radius=.5mm]--(5,0)node[scale=.8][above]{$v_{n-5}$}circle [radius=.5mm] (1,-.8)circle[radius=.5mm] (2,-.8)circle [radius=.5mm] (2,-1.6)circle [radius=.5mm] (4,-.8)circle [radius=.5mm];
\draw (1,0)--(1,-.8) (2,0)--(2,-.8)--(2,-1.6) (4,0)--(4,-.8);
\draw(2,0)[dash pattern= on 1pt off 1pt]--(4,0) (2.5,-2) node {$T_{13}$};
\end{tikzpicture}
\hskip .5cm 
\begin{tikzpicture}[scale=.8]
\filldraw (0,0)node[scale=.8][above]{$v_0$} circle [radius=.5mm]--(1,0)node[scale=.8][above]{$v_1$}circle [radius=.5mm]--(2,0)node[scale=.8][above]{$v_2$}circle [radius=.5mm](4,0)node[scale=.8][above]{$v_{n-6}$}circle [radius=.5mm]--(5,0)node[scale=.8][above]{$v_{n-5}$}circle [radius=.5mm] (1,-.8)circle[radius=.5mm] (2,-.8)circle [radius=.5mm] (2.2,-1.6)circle [radius=.5mm] (1.8,-1.6)circle [radius=.5mm];
\draw (1,0)--(1,-.8) (2,0)--(2,-.8) (1.8,-1.6)--(2,-.8)--(2.2,-1.6);
\draw(2,0)[dash pattern= on 1pt off 1pt]--(4,0) (2.5,-2) node {$T_{14}$};
\end{tikzpicture}
\hskip .5cm 
\begin{tikzpicture}[scale=.8]
\filldraw (0,0)node[scale=.8][above]{$v_0$} circle [radius=.5mm]--(1,0)node[scale=.8][above]{$v_1$}circle [radius=.5mm]--(2,0)node[scale=.8][above]{$v_2$}circle [radius=.5mm](4,0)node[scale=.8][above]{$v_{n-6}$}circle [radius=.5mm]--(5,0)node[scale=.8][above]{$v_{n-5}$}circle [radius=.5mm] (2,-.8)circle [radius=.5mm] (2,-1.6)circle[radius=.5mm] (2.3,-1.6)circle [radius=.5mm] (1.7,-1.6)circle [radius=.5mm];
\draw  (2,0)--(2,-.8)--(2,-1.6) (1.7,-1.6)--(2,-.8)--(2.3,-1.6);
\draw(2,0)[dash pattern= on 1pt off 1pt]--(4,0) (2.5,-2) node {$T_{15}$};
\end{tikzpicture}
\hskip .5cm 
\begin{tikzpicture}[scale=.8]
\filldraw (0,0)node[scale=.8][above]{$v_0$} circle [radius=.5mm]--(1,0)node[scale=.8][above]{$v_1$}circle [radius=.5mm]--(2,0)node[scale=.8][above]{$v_2$}circle [radius=.5mm](4,0)node[scale=.8][above]{$v_{n-6}$}circle [radius=.5mm]--(5,0)node[scale=.8][above]{$v_{n-5}$}circle [radius=.5mm] (1.6,-.7)circle[radius=.5mm] (1.2,-1.4)circle [radius=.5mm]   (2.4,-.7)circle [radius=.5mm] (2.8,-1.4)circle [radius=.5mm];
\draw (1.2,-1.4)--(1.6,-.7)--(2,0)--(2.4,-.7)--(2.8,-1.4);
\draw(2,0)[dash pattern= on 1pt off 1pt]--(4,0) (2.5,-2) node {$T_{16}$};
\end{tikzpicture}
\hskip .5cm 
\begin{tikzpicture}[scale=.8]
\filldraw (0,0)node[scale=.8][above]{$v_0$} circle [radius=.5mm]--(1,0)node[scale=.8][above]{$v_1$}circle [radius=.5mm]--(2,0)node[scale=.8][above]{$v_2$}circle [radius=.5mm](4,0)node[scale=.8][above]{$v_{n-7}$}circle [radius=.5mm]--(5,0)node[scale=.8][above]{$v_{n-6}$}circle [radius=.5mm]--(6,0)node[scale=.8][above]{$v_{n-5}$}circle [radius=.5mm] (2,-.8)circle[radius=.5mm] (2,-1.6)circle [radius=.5mm]   (4,-.8)circle [radius=.5mm] (4,-1.6)circle [radius=.5mm];
\draw (2,0)--(2,-.8)--(2,-1.6) (4,0)--(4,-.8)--(4,-1.6);
\draw(2,0)[dash pattern= on 1pt off 1pt]--(4,0) (3,-3) node {$T_{17}$};
\end{tikzpicture}
\hskip 1cm 
\begin{tikzpicture}[scale=.8]
\filldraw (0,0)node[scale=.8][above]{$v_0$} circle [radius=.5mm]--(1,0)node[scale=.8][above]{$v_1$}circle [radius=.5mm]--(2,0)node[scale=.8][above]{$v_2$}circle [radius=.5mm]--(3,0)node[scale=.8][above]{$v_3$}circle [radius=.5mm](6,0)node[scale=.8][above]{$v_{n-6}$}circle [radius=.5mm]--(7,0)node[scale=.8][above]{$v_{n-5}$}circle [radius=.5mm] (3,0)--(3,-.8)circle[radius=.5mm]--(3,-1.6)circle [radius=.5mm]-- (3,-2.4)circle [radius=.5mm] (6,-.8)circle [radius=.5mm]--(6,0);
\draw(3,0)[dash pattern= on 1pt off 1pt]--(6,0) (3.5,-3) node {$T_{18}$};
\end{tikzpicture}
\hskip 1cm 
\begin{tikzpicture}[scale=.8]
\filldraw (0,0)node[scale=.8][above]{$v_0$} circle [radius=.5mm]--(1,0)node[scale=.8][above]{$v_1$}circle [radius=.5mm]--(2,0)node[scale=.8][above]{$v_2$}circle [radius=.5mm]--(3,0)node[scale=.8][above]{$v_3$}circle [radius=.5mm](6,0)node[scale=.8][above]{$v_{n-6}$}circle [radius=.5mm]--(7,0)node[scale=.8][above]{$v_{n-5}$}circle [radius=.5mm] (3,0)--(3,-.8)circle[radius=.5mm]--(3,-1.6)circle [radius=.5mm]-- (2.6,-2.4)circle [radius=.5mm] (3.4,-2.4)circle [radius=.5mm]--(3,-1.6);
\draw(3,0)[dash pattern= on 1pt off 1pt]--(6,0) (3.5,-4) node {$T_{19}$};
\end{tikzpicture}
\hskip 1cm 
\begin{tikzpicture}[scale=.8]
\filldraw (0,0)node[scale=.8][above]{$v_0$} circle [radius=.5mm]--(1,0)node[scale=.8][above]{$v_1$}circle [radius=.5mm]--(2,0)node[scale=.8][above]{$v_2$}circle [radius=.5mm]--(3,0)node[scale=.8][above]{$v_3$}circle [radius=.5mm]--(4,0)node[scale=.8][above]{$v_4$}circle [radius=.5mm] (7,0)node[scale=.8][above]{$v_{n-6}$}circle [radius=.5mm]--(8,0)node[scale=.8][above]{$v_{n-5}$}circle [radius=.5mm] (4,0)--(4,-.8)circle[radius=.5mm]--(4,-1.6)circle [radius=.5mm]-- (4,-2.4)circle [radius=.5mm]--(4,-3.2)circle [radius=.5mm];
\draw(4,0)[dash pattern= on 1pt off 1pt]--(7,0) (4.5,-4) node {$T_{20}$};
\end{tikzpicture}
\caption {The potential candidates for maximal Wiener index in $\mathfrak{T}_{n}(n-5)$}\label{maximal n,n-5}
\end{center}
\end{figure} 

 The eccentricity $e(v)$ of a vertex $v$ in $T$ is defined as $e(v)=\max\{d(v,u): u\in V(T)\}$. The {\it center} of $T$ is the set of vertices with minimum eccentricity. We denote the center of $T$ by $C(T)$. Let $G$ be a graph and $S_1, S_2$ be two subsets of $V(G)$. The distance between $S_1$ and $S_2$ is defined as $d(S_1,S_2)=\min \{d(v_1,v_2): v_1\in S_1, v_2\in S_2\}.$ As the value of $k$ increases, the list of trees in $\mathfrak{T}_n(n-k)$ that may attain the maximum Wiener index becomes larger, which makes it difficult to describe the maximal trees through manual analysis. However, the maximal trees can still be identified for a few more values of $k$, such as $6,7$ and $8$. By imposing further structural constraints on the possible maximal trees, the list of potential candidates can further be reduced. In particular, we propose that an additional restriction can be formulated in terms of the distance between the centre and the median of the trees. We therefore conclude this section with the following conjecture, which we believe to be true.

\begin{conjecture}
Let $W(T_0)=\max\{W(T): T\in \mathfrak{T}_n(d)\}$. Then $d(C(T_0), M(T_0))=0$.
\end{conjecture}


\section{Edge-minimal graphs in $\mathfrak{C}_{n,k}(d)$ and $\mathfrak{C}_n(d)$}\label{edge-minimality}
 An {\it edge-minimal} graph in $\mathfrak{C}_{n,k}(d)$ is a graph such that $G-e\not\in \mathfrak{C}_{n,k}(d)$ for any $e\in E(G)$. It is clear that if a graph has maximal Wiener index over $\mathfrak{C}_{n,k}(d)$, then it is edge-minimal. Let $G$ be a graph, and let $G'$ be obtained by deleting some of its edges. Then we say $G$ is reducible to $G'$ by edge deletion or simply $G$ is reducible to $G'$.  If $G'$ can not be obtained by deleting edges of $G$, we say $G$ is not reducible to $G'$  (or $G$ can not be reduced to $G'$). If $G$ is reducible to $G'$, then by Lemma \ref{edge deletion}, it follows that $W(G)<W(G')$. A graph in $\mathfrak{C}_{n,k}(d)$ is either edge-minimal or is reducible to an edge-minimal graph in $\mathfrak{C}_{n,k}(d)$. For example, in Figure \ref{reduce edge-minimal}, the Graph $G_0$ is not edge-minimal in $\mathfrak{C}_{7,2}(3)$, but by deleting the edges $e_1$ and $e_2$, we obtain the graph $G_2$, which is edge-minimal in $\mathfrak{C}_{7,2}(3)$.

\begin{figure}[h!]
\begin{center}
\begin{tikzpicture}[scale =.8]
\filldraw (0,0) circle [radius=.5mm]--(1.2,0) circle [radius=.5mm]--(1.5,1.2) circle [radius=.5mm]--(.6,2) circle [radius=.5mm]--(-0.2,1.2) circle [radius=.5mm]--(0,0) (1.7,2.7) circle [radius=.5mm] (-.4,2.7)circle [radius=.5mm];
\draw (-.4,2.7)--(-.2,1.2) (1.5,1.2)--(1.7,2.7) (-.2,1.2)--(1.4,1.2);
\draw(.2,1.6) node [scale=.8][above]{$e_1$} (.6,0) node [scale=.8][above]{$e_2$} (.6,-1) node {$G_0$};
\end{tikzpicture}
\hskip 1 cm
\begin{tikzpicture}[scale =.8]
\filldraw (0,0) circle [radius=.5mm]--(1.2,0) circle [radius=.5mm]--(1.5,1.2) circle [radius=.5mm]--(.6,2) circle [radius=.5mm](-0.2,1.2) circle [radius=.5mm]--(0,0) (1.7,2.7) circle [radius=.5mm] (-.4,2.7)circle [radius=.5mm];
\draw (-.4,2.7)--(-.2,1.2) (1.5,1.2)--(1.7,2.7) (-.2,1.2)--(1.4,1.2);
\draw (.6,0) node [scale=.8][above]{$e_2$} (.6,-1) node {$G_1\cong G_0-e_1$};
\end{tikzpicture}
\hskip 1 cm
\begin{tikzpicture}[scale =.8]
\filldraw (0,0) circle [radius=.5mm](1.2,0) circle [radius=.5mm]--(1.5,1.2) circle [radius=.5mm]--(.6,2) circle [radius=.5mm](-0.2,1.2) circle [radius=.5mm]--(0,0) (1.7,2.7) circle [radius=.5mm] (-.4,2.7)circle [radius=.5mm];
\draw (-.4,2.7)--(-.2,1.2) (1.5,1.2)--(1.7,2.7) (-.2,1.2)--(1.4,1.2);
\draw (.6,-1) node {$G_2\cong G_1-e_2$};
\end{tikzpicture}
\end{center}
\caption{Reducing a graph in $\mathfrak{C}_{7,2}(3)$ to an edge-minimal graph in $\mathfrak{C}_{7,2}(3)$}\label{reduce edge-minimal}
\end{figure}

\noindent Further, any graph can be reduced to a tree by edge deletion. But a graph in $\mathfrak{C}_{n,k}(d)$ may not be reducible to a tree in $\mathfrak{T}_{n,k}(d)$ or in $\mathfrak{T}_n(d)$. For example each of $G_4$, $G_5$ and $G_6$ of  Figure \ref{edge-minimal in C-9-3-4-triangle}  belongs to $\mathfrak{C}_{9,3}(4)$ and is reducible to a tree in $\mathfrak{T}_9(4)$, whereas the graph $G_3$ of Figure \ref {edge-minimal in C-7-3-3} belongs to $\mathfrak{C}_{7,3}(3)$ but is not reducible to a tree in $\mathfrak{T}_7(3)$.\\

We now identify several edge-minimal graphs within certain graph families, which will play a key role in the proof of our main result in Section \ref{Graphs with cut vertices}. Table \ref{Trees in 7-2-3} is a list of all the non isomorphic edge-minimal graphs in $\mathfrak{C}_{7,2}(3)$. There is only one edge-minimal graph in $\mathfrak{C}_{7,3}(3)$, which is shown in Figure \ref{edge-minimal in C-7-3-3}. We now turn to the edge-minimal graphs in $\mathfrak{C}_{9,3}(4)$ that are not reducible to a tree in $\mathfrak{T}_9(4)$. The following lemma provides a characterisation of such graphs.
\begin{table}[h!]
\begin{center}
\begin{tabular}{|c|c|}
\hline
$G$ & $W(G)$\\
\hline 
\begin{tikzpicture}[scale =.6]
\filldraw (0,0) circle [radius=.5mm]--(1,0) circle [radius=.5mm]--(2,0) circle [radius=.5mm]--(3,0) circle [radius=.5mm];
\filldraw (0.3,0.8) circle [radius=.5mm] (1,1) circle [radius=.5mm] (1.7,.8) circle [radius=.5mm];
\draw  (0.3,0.8)--(1,0) --(1,1) (1,0)--(1.7,.8);
\end{tikzpicture} & 40 \\
\hline
\begin{tikzpicture}[scale =.6]
\filldraw (0,0) circle [radius=.5mm]--(1,0) circle [radius=.5mm]--(2,0) circle [radius=.5mm]--(3,0) circle [radius=.5mm];
\filldraw (0.3,0.8) circle [radius=.5mm] (1,1) circle [radius=.5mm] (2,1) circle [radius=.5mm];
\draw  (0.3,0.8)--(1,0) --(1,1) (2,0)--(2,1);\end{tikzpicture}& 42\\
\hline
\begin{tikzpicture}[scale =.5]
\filldraw (0,0) circle [radius=.5mm]--(1.2,0) circle [radius=.5mm]--(1.5,1.2) circle [radius=.5mm]--(.6,2) circle [radius=.5mm]--(-0.2,1.2) circle [radius=.5mm]--(0,0) (1.7,2.7) circle [radius=.5mm] (.6,3.3)circle [radius=.5mm];
\draw (.6,3.3)--(.6,2) (1.5,1.2)--(1.7,2.7);	
\end{tikzpicture}& 40\\
\hline
\end{tabular}
\caption {The non isomorphic edge-minimal graphs in $\mathfrak{C}_{7,2}(3)$}\label{Trees in 7-2-3}
\end{center}
\label{default}
\end{table}%
\begin{figure}[h!]
\begin{center}
\begin{tikzpicture}[scale =.6]
\filldraw (0,0) circle [radius=.5mm]--(2,0) circle [radius=.5mm]--(1,1.73) circle [radius=.5mm]--(0,0);
\filldraw (-1,-1) circle [radius=.5mm]--(0,0) (3,-1) circle [radius=.5mm]--(2,0); 
\filldraw (0,3) circle [radius=.5mm] (2,3) circle [radius=.5mm]; 
\draw (0,3)--(1,1.73)--(2,3);
\draw (1,-2) node {$G_3$};
\end{tikzpicture}
\caption{The only edge-minimal graph in $\mathfrak{C}_{7,3}(3)$}\label {edge-minimal in C-7-3-3}
\end{center}
\end{figure}

\begin{lemma}\label{block order in C-9-3-4}
Let $G_0\in \mathfrak{C}_{9,3}(4)$ be an edge-minimal graph which is not reducible to a tree in $\mathfrak{T}_9(4)$. Then $G_0$ has a non-pendant block of order $5$ or $6$.
\end{lemma}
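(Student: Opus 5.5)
The plan is to prove the contrapositive. Assume every non-pendant block of $G_0$ has order different from $5$ and $6$. We will show that $G_0$ can then be reduced, by edge deletion, to a tree of diameter exactly $4$, i.e.\ to a member of $\mathfrak{T}_9(4)$.

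\textbf{Step 1: bound the non-pendant blocks.} Use the block--cut vertex tree. Let $\mathcal{B}$ be the set of blocks. Since $G_0$ is connected with $9$ vertices, $\sum_{B\in\mathcal{B}}(|V(B)|-1)=8$. Each of the $3$ cut vertices lies in at least two blocks, and there are at least two pendant blocks, each of order at least $2$.
\begin{itemize}
\item Suppose a non-pendant block $B$ had order at least $7$. Every cut vertex needs at least one block other than $B$, contributing at least $1$ to the sum. Hence the sum would be at least $6+3>8$ if all three cut vertices lie on $B$, and at least $6+2$ plus a further block otherwise. A short count rules this out.
\item So, under the contrapositive hypothesis, every non-pendant block has order $2$, $3$ or $4$.
\item By edge-minimality, each block is a minimal $2$-connected graph compatible with the diameter and cut-vertex constraints. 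In particular a non-pendant block of order $4$ is $C_4$ or $K_4-e$, and a non-pendant block of order $3$ is a triangle.
\end{itemize}

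\textbf{Step 2: enumerate the block structures.} The three cut vertices form either a path-like arrangement in the block--cut tree or all lie in one common block. Combined with $\sum(|V(B)|-1)=8$ and the order restrictions, this leaves finitely many block structures. They are distinguished by:
\begin{itemize}
\item the orders of the one or two non-pendant blocks ($2$, $3$ or $4$);
\item the multiset of pendant-block orders;
\item which vertices of each non-pendant block are cut vertices.
\end{itemize}
I would list these configurations explicitly, in the same spirit as Table \ref{Trees in 7-2-3}.

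\textbf{Step 3: reduce each block to a spanning tree.}
\begin{itemize}
\item For a pendant block $B$ at cut vertex $c$, replace $B$ by a breadth-first spanning tree rooted at $c$. This keeps $d(c,x)$ unchanged for every $x\in V(B)$. Any pair straddling $c$ therefore keeps its distance, and any pair inside $B$ moves to distance at most $2e_B(c)$.
\item For a non-pendant triangle or $C_4$/$K_4-e$ containing two or three cut vertices, delete edges so that the result is a path or star through those cut vertices. Choose the centre of this path or star so that it minimises the increase of the eccentricities of the cut vertices.
\end{itemize}

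\textbf{Step 4: check the diameter.} Verify that each reduction keeps the diameter exactly $4$. It cannot drop below $4$, since deleting edges only increases distances. The work is therefore the upper bound $\le 4$. This uses the small orders: a pendant block has at most $9-1-3=5$ vertices, and the eccentricities inside blocks of order at most $4$ are at most $2$. When a pendant block has order $4$ or $5$, I would use the diameter-$4$ hypothesis on $G_0$ itself. That hypothesis forces the cut vertex to be near every vertex of the block, so the BFS tree has depth at most $2$ and the other side of $G_0$ must be shallow.

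\textbf{Main obstacle.} I expect the hardest part to be the configurations in which a long pendant block (a $C_4$ or $C_5$ hanging at a cut vertex) coexists with a non-pendant triangle. Naive spanning trees of the two blocks each add $1$ to some distance, and the diameter can jump to $5$. Here I would try choosing the BFS roots of both blocks compatibly, i.e.\ both centred toward the same central cut vertex. If some configuration genuinely cannot be reduced, I would check that it forces a non-pendant block of order $5$ or $6$, which contradicts the assumption. That situation is exactly what the lemma singles out.
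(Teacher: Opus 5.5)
Your contrapositive route differs from the paper's, and its target is true: every $G_0\in\mathfrak{C}_{9,3}(4)$ whose non-pendant blocks have order at most $4$ does reduce to a tree in $\mathfrak{T}_9(4)$. But the proposal does not prove it, because Step 4 is left undone. There you only say you would verify the diameter, and you give an imprecise rule for choosing the spanning tree of a non-pendant block. Then, under ``main obstacle'', you concede that the diameter may jump to $5$.

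The fallback you offer is empty. Under your contrapositive hypothesis no non-pendant block has order $5$ or $6$, so an irreducible configuration cannot ``force'' one; it would simply be a counterexample to the lemma. The choice of deleted edges also really matters. For a non-pendant $C_4$ $abcd$ whose cut vertices are $a,b,c$, deleting $ab$ gives $d(a,b)=3$ and hence diameter at least $5$, while deleting $cd$ is safe. So Step 4 is the whole proof, not a routine check. (Minor slip: a pendant block can have order $6$, e.g.\ a fan on $6$ vertices hung on an end cut vertex when the cut vertices lie on two adjacent $K_2$ blocks; your bound of $5$ is off, though this is harmless.)

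The missing idea is the quantitative constraint that $\mathrm{diam}(G_0)=4$ imposes. For cut vertices $x,y$ of a non-pendant block $B$, let $h_x\geq 1$ be the largest distance from $x$ to a vertex separated from $B$ by $x$. Then $h_x+d_B(x,y)+h_y\leq 4$, and this settles every case:
\begin{itemize}
\item \textbf{Triangle $abc$ of cut vertices.} Exactly one of $h_a,h_b,h_c$ equals $2$, since all equal to $1$ would give diameter $3$. Deleting the edge opposite that vertex is safe.
\item \textbf{Triangle with only $a,b$ cut vertices.} Here $h_b\leq 2$, so deleting $bc$ raises distances only to $2+h_b\leq 4$.
\item \textbf{$4$-block.} Two cut vertices at distance $2$ in $B$ both have $h=1$. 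So if $B$ has exactly two cut vertices they must be adjacent, since otherwise the third cut vertex has nowhere to go. In every configuration a suitable edge at a non-cut vertex (the paper's $cd$; not an arbitrary one) can be deleted without pushing any distance past $4$.
\end{itemize}

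The paper packages exactly these deletions through edge-minimality. Each deletion keeps the graph in $\mathfrak{C}_{9,3}(4)$, so an edge-minimal $G_0$ has no non-pendant block of order $3$ or $4$ at all. If all non-pendant blocks are $K_2$, there are exactly two of them, adjacent, and breadth-first trees at the cut vertices give the required tree; otherwise the block-order count caps the order at $6$. Your direct route would also work once these inequalities are supplied, and it would not even need edge-minimality. As written, however, it stops short of the argument.
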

\begin{proof}
If all the non-pendant blocks of $G_0$ are $K_2$, then $G_0$ must have exactly two $K_2$ adjacent non-pendant blocks, and it is straightforward that $G_0$ is reducible to a tree in $\mathfrak{T}_9(4)$. Hence, $G_0$ has a non-pendant block of order at least $3$. Let $B$ be such a block in $G_0$.\\

Suppose $|V(B)|=3$. Then $B$ is a triangle, say $B\cong a\sim b\sim c\sim a$. First suppose $B$ contains only $2$ cut vertices of $G_0$ say $a$ and $b$. Delete the edge  $\{bc\}$. Then $G_0\setminus \{bc\}\in \mathfrak{C}_{9,3}(4)$, which contradicts the minimality of $G_0$. Now suppose all three vertices $a,b$ and $c$ of $B$ are cut vertices. As $G_0$ has $3$ cut vertices, except $B$ all other blocks of $G_0$ are pendant. Further, since $diam(G_0)=4$, $G_0$ must have a pendant block of order $4$ or $5$. Suppose the pendant block sharing $a$ is of order $4$ or $5$. Then it can be easily seen that $G_0$ is isomorphic to one of the graphs $G_4, G_5$ or $G_6$ of Figure \ref{edge-minimal in C-9-3-4-triangle}, and so $G_0\setminus \{bc\}\in \mathfrak{C}_{9,3}(4)$, which contradicts the minimality of $G_0$.\\

Now suppose $|V(B)|=4$. Then $B$ must contain a $4$-cycle. Let $V(B)=\{a, b, c, d\}$ and $\{ab\}, \{bc\}, \{cd\}, \{da\}\in E(G_0)$. First assume that $B$ contains all the three cut vertices of $G_0$, and let these vertices be $a, b$ and $c$. Then it is not difficult to see that $G_0\setminus \{cd\}\in \mathfrak{C}_{9,3}(4)$, which contradicts the minimality of $G_0$. Now assume that $B$ contains only two cut vertices of $G_0$. These two cut vertices must be adjacent. If not, without loss of generality assume that $a$ and $c$ are the cut vertices of $G_0$ in $B$ and $d(a,c)=2$. As $diam (G_0)=4$, every vertex of $G_0\setminus B$ is adjacent to either $a$ or $c$, which implies $a$ and $c$ are the only cut vertices of $G_0$, a contradiction. So the two cut vertices must be adjacent. Now it is enough to consider two cases: $a$ and $b$ as cut-vertices, and $a$ and $c$ as cut-vertices, where $\{ac\} \in E(G_0)$.  If $a$ and $b$ are the cut vertices, then $G_0\setminus\{cd\}\in \mathfrak{C}_{9,3}(4)$, which is a contradiction. If $a$ and $c$ are the cut vertices, then also $G_0\setminus \{cd\} \in \mathfrak{C}_{9,3}(4)$, a contradiction. \\

Therefore $|V(B)|\geq 5$. Also as $G_0$ has $3$ cut vertices, $|V(B)|\leq 6$. Hence order of $B$ is either $5$ or $6$. 
\end{proof}

\begin{figure}[h!]
\begin{center}
\begin{tikzpicture}[scale =.6]
\filldraw (0,0) circle [radius=.5mm]--(2,0) circle [radius=.5mm]--(1,1.73) circle [radius=.5mm]--(0,0);
\filldraw (-1,-1) circle [radius=.5mm]--(0,0) (3,-1) circle [radius=.5mm]--(2,0); 
\filldraw (0,3) circle [radius=.5mm] (2,3) circle [radius=.5mm] (1,4.2) circle [radius=.5mm] (-.2,1.73) circle [radius=.5mm]--(1,1.73); 
\draw (1,4.2)--(0,3)--(1,1.73)--(2,3)--(1,4.2);
\draw (1,-2) node {$G_4$};
\draw (-.3,.1) node{$b$} (2.3,0.1) node{$c$} (1.3,1.73) node {$a$};
\end{tikzpicture}
\hskip .5cm
\begin{tikzpicture}[scale =.6]
\filldraw (0,0) circle [radius=.5mm]--(2,0) circle [radius=.5mm]--(1,1.73) circle [radius=.5mm]--(0,0);
\filldraw (-1,-1) circle [radius=.5mm]--(0,0) (3,-1) circle [radius=.5mm]--(2,0); 
\filldraw (0,3) circle [radius=.5mm] (2,3) circle [radius=.5mm] (1,4.2) circle [radius=.5mm] (1,-1) circle [radius=.5mm]--(2,0); 
\draw (1,4.2)--(0,3)--(1,1.73)--(2,3)--(1,4.2);
\draw (1,-2) node {$G_5$};
\draw (-.3,.1) node{$b$} (2.3,0.1) node{$c$} (1.3,1.73) node {$a$};
\end{tikzpicture}
\hskip .5cm
\begin{tikzpicture}[scale =.6]
\filldraw (0,0) circle [radius=.5mm]--(2,0) circle [radius=.5mm]--(1,1.73) circle [radius=.5mm]--(0,0);
\filldraw (-1,-1) circle [radius=.5mm]--(0,0) (3,-1) circle [radius=.5mm]--(2,0); 
\filldraw (0,3) circle [radius=.5mm] (2,3) circle [radius=.5mm]  (0,4.2) circle [radius=.5mm]--(2,4.2) circle [radius=.5mm]; 
\draw (0,4.2)--(0,3)--(1,1.73)--(2,3)--(2,4.2);
\draw (1,-2) node {$G_6$};
\draw (-.3,.1) node{$b$} (2.3,0.1) node{$c$} (1.3,1.73) node {$a$};
\end{tikzpicture}
\caption{The non isomorphic edge-minimal graphs in $\mathfrak{C}_{9,3}(4)$ with a triangle non-pendant block}\label {edge-minimal in C-9-3-4-triangle}
\end{center}
\end{figure}

With the help of Lemma \ref{block order in C-9-3-4}, the graphs in $\mathfrak{C}_{9,3}(4)$ that contain a non-pendant block of order $5$ or $6$ can be checked for edge-minimality. These graphs can be further filtered to identify those that are not reducible to a tree in $\mathfrak{T}_9(4)$. We have listed them in Figure \ref{edge-minimal in 9-3-4}, and using (\ref{WI in terms of distance}), their Wiener indices can be computed as follows: $W(G_7)=84$, $W(G_8)=82$, $W(G_9)=80$ and $W(G_{10})=79$.

\begin{figure}[h!]
\begin{center}
\begin{tikzpicture}[scale =.5]
\filldraw (0,0) circle [radius=.5mm]--(1,0) circle [radius=.5mm]--(1.5,1) circle [radius=.5mm]--(1,2) circle [radius=.5mm]--(0,2) circle [radius=.5mm]--(-.5,1) circle [radius=.5mm]--(0,0);
\filldraw (-1,2.6) circle [radius=.5mm] -- (0,2)  (2.5,1.2) circle [radius=.5mm] -- (1.5,1) (.1,-1.1) circle [radius=.5mm] -- (0,0);
\draw (.5,-2) node {$G_7$};
\end{tikzpicture} 
\hskip 1cm
\begin{tikzpicture}[scale =.5]
\filldraw (0,0) circle [radius=.5mm]--(1,0) circle [radius=.5mm]--(1.5,1) circle [radius=.5mm]--(1,2) circle [radius=.5mm]--(0,2) circle [radius=.5mm]--(-.5,1) circle [radius=.5mm]--(0,0);
\filldraw (1,0)--(2,0) circle [radius=.5mm]  (1.5,1) -- (2.5,1) circle [radius=.5mm]  (1,2)-- (2,2) circle [radius=.5mm];
\draw (.5,-1) node{$G_8$};
\end{tikzpicture}
\hskip 1 cm
\begin{tikzpicture}[scale =.5]
\filldraw (0,0) circle [radius=.5mm]--(1,0) circle [radius=.5mm]--(1.5,1) circle [radius=.5mm]--(.5,2) circle [radius=.5mm]--(-.5,1) circle [radius=.5mm]--(0,0);
\filldraw (1,0)--(1.5,-1) circle [radius=.5mm]  (1,3) circle [radius=.5mm] --(.5,2) (.5,2) -- (0,3) circle [radius=.5mm]  (0,0)-- (-.5,-1) circle [radius=.5mm];
\draw (.5,-1.4) node{$G_9$};
\end{tikzpicture}
\hskip 1cm
\begin{tikzpicture}[scale =.5]
\filldraw (0,0) circle [radius=.5mm]--(1,0) circle [radius=.5mm]--(1.5,1) circle [radius=.5mm]--(.5,2) circle [radius=.5mm]--(-.5,1) circle [radius=.5mm]--(0,0);
\filldraw (1,0)--(.5,-1)circle [radius=.5mm] (1,0)--(1.5,-1) circle [radius=.5mm]   (.5,2) -- (.5,3) circle [radius=.5mm]  (0,0)-- (-.5,-1) circle [radius=.5mm];
\draw (.5,-1.4) node{$G_{10}$};
\end{tikzpicture}
\hskip 1cm
\caption {The non isomorphic edge-minimal graphs in $\mathfrak{C}_{9,3}(4)$ not reducible to a tree in $\mathfrak{T}_9(4)$}\label{edge-minimal in 9-3-4}
\end{center}
\end{figure}
Next we identify the edge-minimal graphs in $\mathfrak{C}_{n,n-4}(\frac{n-1}{2})$ for $n\geq 7$.
\begin{lemma}\label{4t+3 not possible}
Let $n\geq 7$ be an odd integer. 
\begin{enumerate}
\item[$(i)$] If $n\equiv 1\pmod{4}$, then $\mathfrak{T}_{n,n-4}(\frac{n-1}{2})=\{T_{21}\}$, where $T_{21}$ is the tree in Figure \ref{n cong 1 mod 4}. 
\item[$(ii)$] If $n\equiv 3 \pmod{4}$, then  $\mathfrak{T}_{n,n-4}(\frac{n-1}{2})=\emptyset$. 
\end{enumerate}
\end{lemma}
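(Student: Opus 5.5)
A tree on $n$ vertices with $k$ cut vertices has exactly $n-k$ leaves, because in a tree the cut vertices are precisely the non-pendant vertices. So a tree in $\mathfrak{T}_{n,n-4}(\frac{n-1}{2})$ is a tree with exactly $4$ pendant vertices and diameter $d=\frac{n-1}{2}$. The plan is to classify trees with four leaves and then impose the diameter and vertex-count constraints.

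A tree with exactly four leaves has one of two shapes.

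\emph{Spider shape.} There is a single vertex of degree $4$ and every other vertex has degree at most $2$. The tree is then four internally disjoint paths of lengths $a\ge b\ge c\ge e\ge 1$ glued at a centre.

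\emph{H shape.} There are two vertices $x,y$ of degree $3$ joined by a path of length $m\ge 1$, and all other vertices have degree at most $2$. Two legs of lengths $a_1,a_2$ hang at $x$ and two legs of lengths $b_1,b_2$ hang at $y$.

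This follows from the leaf count $\sum_v (\deg v-2)=-2$: the four leaves contribute $-4$, so the vertices of degree at least $3$ must contribute exactly $+2$.

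In both shapes we count vertices and compute the diameter.

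\emph{Spider.} Here $n=1+a+b+c+e$ and $d=a+b$. The condition $n=2d+1$ gives $c+e=a+b$. But $c\le b$ and $e\le a$, so equality forces $a=b=c=e$. Then $n=4a+1\equiv 1\pmod 4$ and $d=2a$. This is the tree $T_{21}$ (presumably the spider with four legs of length $\frac{n-1}{4}$, as drawn in Figure \ref{n cong 1 mod 4}).

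\emph{H shape.} Assume without loss of generality $a_1\ge a_2$ and $b_1\ge b_2$. Then $n=2+(m-1)+a_1+a_2+b_1+b_2$. The candidate longest paths are
\[
a_1+m+b_1,\qquad a_1+a_2,\qquad b_1+b_2,
\]
and since $m\ge1$, $a_2\le a_1$ and $b_2\le b_1$, the first one dominates. Hence $d=a_1+m+b_1$. Substituting into $n=2d+1$ gives
\[
m+1+a_1+a_2+b_1+b_2=2a_1+2m+2b_1+1,
\]
that is, $a_2+b_2=a_1+b_1+m$. This is impossible, because $a_2+b_2\le a_1+b_1<a_1+b_1+m$. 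So no H-shaped tree lies in the family.

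Combining the two cases: for $n\equiv 1\pmod 4$ the family is exactly $\{T_{21}\}$, and for $n\equiv 3\pmod 4$ it is empty, because the spider needs $4\mid n-1$.

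The main obstacle is making the four-leaf classification and the diameter formula airtight, in particular justifying which path realises the diameter. For the H shape, we only need $d\ge a_1+m+b_1$ for the contradiction, and this lower bound is immediate since that path exists. For the spider, the diameter is exactly $a+b$ since any path uses at most two legs. The hypothesis $n\ge7$ just ensures $d\ge3$ and that four leaves are possible.
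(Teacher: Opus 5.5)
Your proof is correct, but it takes a different route from the paper.

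\textbf{The paper's route.} The paper proves only (ii) and leaves (i) to the reader. It fixes a longest path $P: v_0v_1\cdots v_{2t+1}$. After discarding a degenerate configuration, it writes $T$ as $P$ plus two pendant paths $P_q, P_r$ attached at $v_i, v_j$. Counting the $2t+1$ off-path vertices gives $q+r=2t+3$, hence $q\ge t+2$. Taking $i\le t$, the leaf $z$ of $P_q$ satisfies $d(v_{2t+1},z)\ge (2t+1-i)+(t+1)\ge 2t+2$, which contradicts the diameter.

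\textbf{Your route.} You classify four-leaf trees globally via $\sum_v(\deg v-2)=-2$ into spiders and H-shapes, then compare vertex count with diameter in each shape.

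\textbf{What each buys.} Your approach settles (i) and (ii) at once. The congruence $n\equiv 1\pmod 4$ and the identification with $T_{21}$ fall out of the forced equality $a=b=c=e$; the converse inclusion, that the equal-legged spider lies in the family, is immediate. The paper's diametral-path argument avoids any shape classification and fits the $S_{ij}$ viewpoint used elsewhere in the paper. However, its case split is stated rather loosely, and your degree-sum dichotomy is cleaner on that point.

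\textbf{One correction.} Your assertion that $a_1+m+b_1$ always dominates the other candidate paths is false. For example, $a_1=a_2=5$ and $m=b_1=b_2=1$ give $a_1+a_2=10>7=a_1+m+b_1$. So ``Hence $d=a_1+m+b_1$'' should read $d\ge a_1+m+b_1$. As you observe at the end, this lower bound is all the argument needs: $n=2d+1$ then gives $a_2+b_2\ge a_1+b_1+m>a_1+b_1\ge a_2+b_2$, a contradiction. The same remark applies to the spider: even $d\ge a+b$ alone forces $c+e\ge a+b$, hence equality.
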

\begin{proof}
Both $(i)$ and $(ii)$ are straightforward. We present the proof only for part $(ii)$. Part $(i)$ can be verified easily or established using arguments similar to those used for $(ii)$.
\begin{enumerate}
\item[$(ii)$] Let $n\equiv 3 \pmod{4}$. Then there exists a positive integer $t$ such that $n=4t+3$.  Let $T\in \mathfrak{T}_{4t+3,4t-1}(2t+1)$ and let $P:v_0v_1\cdots v_{2t+1}$ be a longest path in $T$. As $T$ has only $4$ non-cut vertices, the only possibilities are, either exactly one $v_i$ has degree $3$ in $G$ for some $0\leq i\leq 2t+1$, or $T$ consists of $P$ and two paths, each attached to a vertex of $P$. The former possibility can be discarded as in that case $diam(T)$ exceeds $2t+1$, which can be easily observed. For the latter case, let the two paths be $P_q$ and $P_r$ such that $P$ shares only $v_i$ with $P_q$ and only $v_j$ with $P_r$.   WLOG we may assume that $q\geq r$ and $i\leq t$. Also $q+r=2t+3$, which implies $q\geq t+2$. Let $z$ be the pendant vertex of $T$ on $P_q$. Then $d_T(v_{2t+1}, z)=d_T(v_{2t+1}, v_i)+d_T(v_i, z)\geq 2t+1-i+t+1=3t+2-i\geq 2t+2$. This is a contradiction as $diam(T)=2t+1$. 
\end{enumerate}
\end{proof}

\begin{lemma}\label{edge-minimal graphs in n-4 cut vertices graphs}
Let $n\geq 7$ be an odd integer and $G$ be an edge-minimal graph in $\mathfrak{C}_{n,n-4}(\frac{n-1}{2})$. 
\begin{enumerate}
\item[$(i)$] If $n\equiv 1 \pmod{4}$, then $G\cong T_{21}$, where $T_{21}$ is the tree in Figure \ref{n cong 1 mod 4}.
\item[$(ii)$] If $n\equiv 3 \pmod{4}$, then $G\cong G_{12}$, where $G_{12}$ is the graph in Figure \ref{n cong 3 mod 4}.
\end{enumerate}
\end{lemma}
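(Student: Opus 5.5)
We split according to whether $G$ is a tree. If $G$ is a tree, then $G\in\mathfrak{T}_{n,n-4}(\frac{n-1}{2})$. Lemma \ref{4t+3 not possible} then gives $G\cong T_{21}$ when $n\equiv 1\pmod 4$, and shows that this case cannot occur when $n\equiv 3\pmod 4$. The main work is therefore to show that a non-tree edge-minimal $G$ cannot exist when $n\equiv 1\pmod 4$, and must be $G_{12}$ when $n\equiv 3\pmod 4$.

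Assume now that $G$ is not a tree. Since $G$ has $n-4$ cut vertices, it has at most $4$ vertices that are not cut vertices. Every pendant block contains at least one non-cut vertex, and a block $B$ of order $m$ containing $c$ cut vertices contributes $m-c$ non-cut vertices. I would count non-cut vertices across the block tree to bound the possible block structures. A cycle block $C_m$ with $m\ge 3$ either uses up non-cut vertices, or has all of its vertices as cut vertices, in which case each of them needs a pendant branch. Both situations are expensive relative to the budget of $4$.

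Edge-minimality removes most configurations, by the same technique as in Lemma \ref{block order in C-9-3-4}. Any block containing a chord, or any block that is not a cycle, has an edge whose deletion keeps $G$ connected and keeps the number of cut vertices and the diameter unchanged. Hence every non-$K_2$ block should be a cycle, and the deletion of any cycle edge must change either the cut-vertex count or the diameter.

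The diameter condition $\frac{n-1}{2}$ with $n$ vertices is very restrictive. In a tree-like structure, a longest path already has $\frac{n+1}{2}$ vertices, and the remaining $\frac{n-1}{2}$ vertices must be hung on without lengthening it. Using the cut-vertex count, I would show that there is exactly one cycle block, and that all of its vertices are cut vertices or nearly so. Each vertex of the cycle then carries a pendant path, and the lengths of these paths are forced by the diameter condition and the count $n$. This forcing should produce a parity condition: a cycle, through its girth, can "absorb" the extra length only in the residue class $n\equiv 3\pmod 4$, which yields the graph $G_{12}$.

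In the class $n\equiv 1\pmod 4$, I would show that every candidate with a cycle has an edge whose removal gives a tree still in $\mathfrak{C}_{n,n-4}(\frac{n-1}{2})$. That tree would be $T_{21}$, which contradicts edge-minimality.

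The main obstacle is this cycle analysis. It requires deriving exact equations for the cycle length and the lengths of the attached paths from the diameter and order constraints, and then checking, for each cycle edge, whether its deletion increases the diameter or changes the number of cut vertices. I would first establish that the cycle must be a triangle, then carry out the arithmetic on the path lengths, in the same way as the computation $q+r=2t+3$ in Lemma \ref{4t+3 not possible}.
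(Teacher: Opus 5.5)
Your skeleton matches the paper's: the tree case is exactly Lemma~\ref{4t+3 not possible}, and the non-tree case is meant to run through the four non-cut vertices, edge-minimality and path-length arithmetic. The gap is that the non-tree case, which is the entire content of the lemma, is only announced, and the principles you propose for it are false as stated.

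First, it is not true in general that a block with a chord, or any non-cycle block, has an edge whose deletion preserves connectivity, the cut-vertex count and the diameter. Deleting a chord can raise the diameter, and deleting a rim edge can create a new cut vertex. The principle fails exactly where it is needed. Take $n=4t+3$ and a $4$-vertex block $v_av_bv_cv_d$ carrying paths of orders $a=b=c=t+1$, $d=t$. The chord $v_av_c$ is forced by the diameter, since without it the ends of $P_a$ and $P_c$ are $2t+2$ apart. Neither the chord nor $v_av_b$, $v_bv_c$ can be removed, and the deletable edge $v_av_d$ is found only from the path lengths.

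Second, the cut-vertex count does not give a unique cycle block. Take two triangles sharing a vertex. Attach pendant paths with $t$ further vertices at the two free vertices of one triangle, and with $t-1$ further vertices at those of the other. This graph has exactly four non-cut vertices and lies in $\mathfrak{C}_{4t+3,4t-1}(2t+1)$ for $t\ge 2$. It is excluded only because deleting the edge between the free vertices of the second triangle keeps it in the class.

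What is missing is the concrete reduction that your remark about the ``budget of $4$'' only gestures at. If $B$ is a block of order $m\ge 3$, every vertex of $B$ is either a non-cut vertex or the root of a branch away from $B$ containing a non-cut vertex, and these branches are disjoint. Hence $m\le 4$, and when $m=4$ every branch is a path. Only a triangle or a $4$-vertex block then remains, and each is settled by explicit inequalities.

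For a triangle $v_qv_rv_s$, two branches are paths, of orders $r\ge s$ at $v_r,v_s$. Let $q$ be the eccentricity of $v_q$ in the third branch; minimality at $v_rv_s$ forces $r+s=diam(G)+1$. For $n=4t+1$ this gives $r\ge t+1$, so $q\le t-1$ and $v_qv_s$ is deletable, a contradiction. For $n=4t+3$, minimality at $v_qv_s$ and $v_qv_r$ forces $r=s=t+1$ and $q=t$; after excluding a second triangle at $v_q$ as above, one gets $G_{12}$.

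For a $4$-vertex block, order and diameter force the path orders $(t+1,t,t,t)$, resp.\ $(t+1,t+1,t+1,t)$. One then exhibits an edge whose deletion stays in the class: $v_bv_c$ in $G_{11}$, resp.\ $v_av_d$. Your appeal to the girth ``absorbing'' extra length, and your plan to ``first establish that the cycle must be a triangle'', stand in for exactly these computations. Without them the non-tree case is not proved.
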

\begin{proof}
\begin{enumerate}
\item[$(i)$] Let $n \ge 7$ and $n \equiv 1 \pmod{4}$. Then there exists a positive integer $t\geq 2$ such that $n=4t+1$. Let $G$ be an edge-minimal graph in $\mathfrak{C}_{4t+1,4t-3}(2t)$.  Suppose $G$ is not a tree. Since $G$ contains $4$ non-cut vertices, it has a block $B$ such that $|V(B)|\in \{3,4\}$. Further, as $G$ is edge-minimal in $\mathfrak{C}_{4t+1,4t-3}(2t)$, $B$ can not be a pendant block. So $B$ is a non-pendant block. \\

First suppose $|V(B)|=3$ and let $B\cong v_q\sim v_r\sim v_s\sim v_q$. $G\setminus E(B)$ has three connected components, say $G_q$, $G_r$ and $G_s$ containing $v_q, v_r$ and $v_s$, respectively. As $G$ has exactly $4$ non-cut vertices,  two of these components will be a path with a pendant vertex in $\{v_q, v_r, v_s\}$. WLOG assume that $G_r\cong P_r$ and $G_s\cong P_{s}$ with $r\geq s\geq 1$ such that $v_r$ and $v_s$ are pendant vertices in $P_r$ and $P_s$, respectively.  Let $z$ be a vertex in $G_q$ farthest from $v_q$ and say $d(v_q,z)=q$. As $diam(G)=2t$, we have $r+s\leq 2t+1$. If $r+s<2t+1$, then delete the edge $\{v_rv_s\}$. Deleting $\{v_rv_s\}$ has no effect on the number of cut vertices. Further as $r+s< 2t+1$, $G_0\setminus \{v_rv_s\}$ has the same diameter as $G$. So $G\setminus \{v_rv_s\}\in \mathfrak{C}_{4t+1,4t-3}(2t)$, which contradicts the edge-minimality of $G$ in $\mathfrak{C}_{4t+1,4t-3}(2t)$. Therefore $r+s=2t+1$. This implies $r\geq t+1$ and $s\leq t$. As $diam(G)=2t$,  
\begin{align*}
q+r\leq 2t\implies q\leq t-1\implies s+q\leq 2t-1.
\end{align*}
Therefore $G\setminus \{v_qv_s\}\in \mathfrak{C}_{4t+1,4t-3}(2t)$, which contradicts the edge-minimality of $G$ in $\mathfrak{C}_{4t+1,4t-3}(2t)$.\\

Now suppose $|V(B)|=4$. Then $B$ contains a $4$-cycle say $v_a\sim v_b\sim v_c\sim v_d\sim v_a$. As $G$ has $4$ non-cut vertices, $G\setminus E(B)$ is a disjoint union of  $4$ paths say $P_a, P_b, P_c$ and $P_d$ containing $v_a, v_b, v_c$ and $v_d$, respectively. We may assume that $a\geq b\geq c\geq d\geq 1$. As $a+b+c+d=4t+1$, $a+b\geq 2t+1$. Also since $diam(G)=2t$, $a+b\leq 2t+1$. So $a+b=2t+1$. Therefore $c+d=2t$. As $a\geq b$, $a\geq t+1$. This implies $b\leq t$, and hence $c\leq t$ and $d\leq t$. As $c+d=2t$, $c=d=t$. Also $a+c\leq 2t+1$, which implies $a\leq t+1$. Therefore  $a=t+1$ and $b=c=d=t$. This implies $G\cong G_{11}$, where $G_{11}$ is the graph shown in Figure \ref{G11}. But $G_{11}\setminus \{\{v_bv_c\}, \{v_cv_d\}\}\in \mathfrak{C}_{4t+1,4t-3}(2t)$, which contradicts the minimality of $G$.

Therefore $G$ must be a tree, and hence by Lemma \ref{4t+3 not possible} $(i)$, $G\cong T_{21}$.
\begin{figure}[h!]
\begin{center}
\begin{tikzpicture}
\filldraw (0,0) node[below][scale=.8]{$v_d$} circle [radius=.5mm]--(1,0) node[below][scale=.8]{$v_c$} circle [radius=.5mm]--(1,1) circle [radius=.5mm]--(0,1)  circle [radius=.5mm];
\draw (0,0)--(0,1) (1,0)--(1.5,-.25) (0,0)--(-.5,-.25) (0,1)--(-.5,1.25) (1,1)--(1.5,1.25) (0,1)--(1,0);
\filldraw (1.5,-.25)circle [radius=.5mm][dash pattern= on 1pt off 1pt]--(3,-1)circle [radius=.5mm]  (-.5,-.25)circle [radius=.5mm]--(-2,-1)circle [radius=.5mm] (-.5,1.25)circle [radius=.5mm]--(-2.5,2.25)circle [radius=.5mm] (1.5,1.25)circle [radius=.5mm]--(3,2)circle [radius=.5mm] ;
\draw (.95,.94) node[above][scale=.8]{$v_b$} (0,.94) node[above][scale=.8]{$v_a$}; 
\draw[decoration={brace,raise=5pt},decorate]
  (1,0) -- (3,-1);
 \draw (2.3,0)node {$P_{t}$};
 \draw[decoration={brace,mirror,raise=5pt},decorate]
  (0,0) -- (-2,-1);
 \draw (-1,0)node {$P_{t}$};
 \draw[decoration={brace,raise=5pt},decorate]
  (1,1) -- (3,2);
 \draw (2,2)node {$P_{t}$};
 \draw[decoration={brace,mirror,raise=5pt},decorate] (0,1) -- (-2.5,2.25);
 \draw (-1,2.1)node {$P_{t+1}$};
 \draw (.5,-1) node{$G_{11}$};
\end{tikzpicture}
\caption {The only edge-minimal graph in $\mathfrak{C}_{n,n-4}(\frac{n-1}{2})$ containing a non-pendant block of order $4$ when $n=4t+1$}\label{G11}
\end{center}
\end{figure}

\item[$(ii)$] Let $n\geq 7$ and $n\equiv 3 \pmod{4}$. Then there exists a positive integer $t$ such that $n=4t+3$. Let $G$ be an edge-minimal graph in $\mathfrak{C}_{4t+3,4t-1}(2t+1)$. By Lemma \ref{4t+3 not possible} $(ii)$, $G$ can not be a tree. As $G$ is edge-minimal in $\mathfrak{C}_{4t+3,4t-1}(2t+1)$, each pendant block of $G$ must be $K_2$. Let $B$ be a non-pendant block of $G$ different from $K_2$. Then $|V(B)|\in \{3,4\}$. If $|V(B)|=3$, then it is straightforward that $G$ is isomorphic to the graph $G_{12}$ in Figure \ref{n cong 3 mod 4}. Suppose $|V(B)|=4$. Then $B$ contains a $4$ cycle say $v_av_bv_cv_d$ where $\{v_av_b\}, \{v_bv_c\}, \{v_cv_d\}$ and $\{v_dv_a\}\in E(B)$. As $G$ has $4$ non-cut vertices, $G\setminus E(G)$ has four paths. Let these paths be $P_a$, $P_b$, $P_c$ and $P_d$ containing $v_a, v_b, v_c$ and $v_d$, respectively. Let $a\geq b\geq c\geq d$. Then it can be easily concluded that $a=b=c=t+1,d=t$ and $\{v_av_c\}\in E(G)$. Now $G\setminus \{v_av_d\}\in \mathfrak{C}_{4t+3, 4t-1}(2t+1)$, which is a contradiction. So $|V(B)|$ can not be $4$. This completes the proof.

 \end{enumerate}
\end{proof}

\begin{figure}[h!]
    \centering
    \begin{subfigure}[b]{0.45\textwidth}
        \centering
   \begin{tikzpicture}
\filldraw (0,0) node[above][scale=.8]{$v_0$} circle [radius=.5mm]--(1,0) node[above][scale=.8]{$v_1$} circle [radius=.5mm](3,0) node[above][scale=.8]{$v_t$} circle [radius=.5mm] (5,0) node[above][scale=.8]{$v_{2t-1}$} circle [radius=.5mm]--(6,0) node[above][scale=.8]{$v_{2t}$} circle [radius=.5mm];
\filldraw (2.5,-.8) circle [radius=.5mm] (1.5,-2.4) circle [radius=.5mm] (3.5,-.8) circle [radius=.5mm] (4.5,-2.4) circle [radius=.5mm];
\draw (1,0) [dash pattern= on 1pt off 1pt]--(5,0);
\draw (1.5,-2.4)[dash pattern= on 1pt off 1pt]--(2.5,-.8) (3.5,-.8)[dash pattern= on 1pt off 1pt]--(4.5,-2.4);
\draw (2.5,-.8)--(3,0)--(3.5,-.8);
\draw (3,-3) node{$T_{21}$};
 \draw[decoration={brace,raise=5pt},decorate] (3,0) -- (4.5,-2.4);
  \draw[decoration={brace,mirror,raise=5pt},decorate] (3,0) -- (1.5,-2.4);
  \draw (4.5,-1.2) node[scale =.8]{$P_{t+1}$} (1.5,-1.2) node[scale =.8]{$P_{t+1}$};
\end{tikzpicture}
        \caption{when $n=4t+1$}
        \label{n cong 1 mod 4}
    \end{subfigure}
    \hfill 
    \begin{subfigure}[b]{0.45\textwidth}
        \centering
\begin{tikzpicture}
\filldraw (0,0) node[below][scale=.5]{$B$} circle [radius=.5mm]--(1,0) node[below][scale=.5]{$C$} circle [radius=.5mm]--(.5,.8) node[above][scale=.5]{$A$} circle [radius=.5mm];
\draw (.5,.8)--(0,0)--(-.8,-.4) (1,0)--(1.8,-.4) (1.3,1.35)--(.5,.8)--(-.3,1.35);
\filldraw (-.8,-.4) circle [radius=.5mm] (-2.4,-1.2) circle [radius=.5mm] (1.8,-.4) circle [radius=.5mm] (3.4,-1.2) circle [radius=.5mm];
\draw (-.8,-.4)[dash pattern=on 1pt off 1pt]--(-2.4,-1.2) (1.8,-.4) --(3.4,-1.2);
\filldraw (-2,2.5) circle [radius=.5mm](-.3,1.35) circle [radius=.5mm] (1.3,1.35) circle [radius=.5mm] (3,2.5) circle [radius=.5mm];
\draw (-2,2.5) [dash pattern=on 1pt off 1pt]--(-.3,1.35) (1.3,1.35)--(3,2.5);
 \draw [decoration={brace,raise=5pt},decorate] (0.5,.8) -- (3,2.5);
\draw  [decoration={brace,mirror,raise=5pt},decorate](.5,.8)--(-2,2.5);
\draw  [decoration={brace,mirror,raise=5pt},decorate](0,0)--(-2.4,-1.2);
\draw  [decoration={brace,raise=5pt},decorate](1,0)--(3.4,-1.2);
\draw (2.5,0) node {$P_{t+1}$} (-1.5,0) node {$P_{t+1}$} (1.5,2.2) node {$P_{t+1}$} (-.5,2.2) node {$P_{t+1}$} (0.5,-1) node {$G_{12}$};
\end{tikzpicture}
        \caption{when $n=4t+3$}
        \label{n cong 3 mod 4}
    \end{subfigure}

    \caption{The only edge-minimal graphs in $\mathfrak{C}_{n,n-4}(\frac{n-1}{2})$}\label{}
\end{figure}

\section{Graphs with a fixed diameter}\label{Graphs with cut vertices}

Let $n\geq 7$ be an odd integer and $G \in \mathfrak{C}_n(\frac{n-1}{2})$. Then $G$ has some (may be zero) cut vertices. As $P_n$ is the only graph with $n-2$ cut vertices and $diam(P_n)=n-1$, $G$ can not have $n-2$ cut vertices. Further, with some analysis it can be observed that $G$ also can not have $n-3$ cut vertices. This implies $G\in \mathfrak{C}_{n,k}(\frac{n-1}{2})$ for some $k\in \{0,1,\ldots, n-4\}$. One approach to prove Conjecture \ref{the conjecture} could be to prove it for all $G\in \mathfrak{C}_{n,k}(\frac{n-1}{2})$ and $k\in \{0,1,\ldots, n-4\}$. We prove that the conjecture is true for $G \in \mathfrak{C}_{n,k}(\frac{n-1}{2})$ when $k=0,1,2, 3$ or $n-4$. The graphs with the maximum Wiener index in $\mathfrak{C}_{n,0}$ have been characterised in \cite{Plesnik}.

\begin{proposition}[\cite{Plesnik}, Theorem 5]\label{2-connected max}
Let $G\in \mathfrak{C}_{n,0}$, $n\geq 3$. Then $W(G)\leq W(C_n)$ and the equality holds if and only if $G\cong C_n$. 
\end{proposition}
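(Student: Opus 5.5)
This is Plesník's classical result, which the paper cites without proof. The plan is to reduce an arbitrary 2-connected graph to a Hamiltonian-type extremal situation through edge deletion and ear decompositions, then bound distance sums vertex by vertex.

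The first reduction uses Lemma \ref{edge deletion}. We may assume $G$ is minimally 2-connected: if some edge $e$ has $G-e$ still 2-connected, we replace $G$ by $G-e$, which strictly increases $W$. This also handles the equality clause, since any non-minimal $G$ is strictly beaten.

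The core step is a pointwise bound on vertex distances. For every vertex $v$ of a 2-connected graph on $n$ vertices, we aim to show
\[
D_G(v)\le D_{C_n}(v)=\left\lfloor \frac{n^2}{4}\right\rfloor ,
\]
and then apply (\ref{WI in terms of distance}) to get $W(G)\le \frac{n}{2}\lfloor n^2/4\rfloor = W(C_n)$. To prove the bound, let $N_i(v)$ be the set of vertices at distance exactly $i$ from $v$, and let $e=e(v)$ be the eccentricity. By Menger's theorem, 2-connectivity forces two internally disjoint paths from $v$ to any vertex at distance $j$. Each such path meets every layer $N_i$ with $1\le i<j$. Hence $|N_i(v)|\ge 2$ for all $1\le i\le e-1$, and $|N_e(v)|\ge 1$.

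Given these layer sizes, $D_G(v)=\sum_i i|N_i(v)|$ is maximised under $\sum_i |N_i|=n-1$ by pushing vertices as far out as possible. So we take every layer of size $2$ except a final layer of size $1$ or $2$, which is exactly the distance profile of $C_n$. This gives $D_G(v)\le\lfloor n^2/4\rfloor$, with equality only if every layer has size $2$ (the last possibly $1$).

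For equality, suppose $W(G)=W(C_n)$. Then every vertex $v$ has the cycle layer profile. In particular every vertex has exactly two neighbours, because $|N_1(v)|=2$ (for $n\ge 3$). A connected 2-regular graph is a cycle, so $G\cong C_n$.

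The main obstacle is making the layer-counting extremal argument rigorous. One must check that under the constraints $|N_i|\ge 2$ for $i<e$ and $|N_e|\ge 1$ with total $n-1$, the sum $\sum_i i|N_i|$ is maximised by the cycle profile. An exchange argument does this: moving a vertex from an inner layer of size $\ge 3$ to a new outermost layer only increases the sum. Parity bookkeeping for odd versus even $n$ is what produces the floor. The rest of the proof is routine.
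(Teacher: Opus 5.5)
Your argument is correct. The paper gives no proof of this proposition; it simply quotes Plesn\'ik's Theorem 5, so there is no route of the paper's to compare against. What you give is a standard self-contained proof via the pointwise bound $D_G(v)\le\lfloor n^2/4\rfloor$ for every vertex of a 2-connected graph. The Menger/layer step, averaging via $W(G)=\frac12\sum_v D_G(v)$, and the 2-regularity argument for equality are all sound.

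Two remarks.

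First, the opening reduction to minimally 2-connected graphs, and the mention of ear decompositions, are never used. The layer bound holds for every 2-connected graph. Your equality argument (each vertex has exactly two vertices at distance $1$, so $G$ is connected and 2-regular) already settles the equality case directly. You can drop that paragraph.

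Second, the exchange step as you phrase it can leave the feasible set. If the outermost layer has size $1$, creating a new layer beyond it turns that layer into an inner layer of size $1$. Either move the vertex into the last layer in that case, or avoid exchanges altogether, as follows.

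List the distances from $v$ as $d_1\le d_2\le\cdots\le d_{n-1}$. Suppose $d_j=k$. Then layers $1,\dots,k-1$ are inner layers, so each has at least two vertices. All of these vertices occupy positions before $j$, so $j-1\ge 2(k-1)$, that is, $d_j\le\lceil j/2\rceil$. Summing gives $D_G(v)\le\sum_{j=1}^{n-1}\lceil j/2\rceil=\lfloor n^2/4\rfloor$, with no parity case split. Equality forces $d_j=\lceil j/2\rceil$ for all $j$. In particular there are exactly two vertices at distance $1$, which is what your equality argument needs.
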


\noindent The graphs with the maximum Wiener index in $\mathfrak{C}_{n,k}$ for $1\leq k\leq 3$ have been characterised in \cite{Pandey 1}.
\begin{proposition}[\cite{Pandey 1}, Theorem5.5, Theorem 5.7, Theorem 5.9]\label{max fixed cut vertices}
Let $1\leq k\leq 3$ and $G\in \mathfrak{C}_{n,k}$.
\begin{enumerate}
\item[$(i)$] If $k=1$ and $n\geq 7$, then $W(G)\leq W(L_{n,n-1})$ and the equality holds iff $G\cong L_{n,n-1}$,
\item[$(ii)$] if k=2 and $n\geq 10$, then $W(G)\leq W(L_{n,n-2})$ and the equality holds iff $G\cong L_{n,n-2}$, and 
\item[$(iii)$] if $k=3$ and $n\geq 14$, then $W(G)\leq W(L_{n,n-3})$ and the equality holds iff $G\cong L_{n,n-3}$.
\end{enumerate}
\end{proposition}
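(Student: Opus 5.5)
The plan is an extremal-structure argument: first use edge-minimality to reduce to block graphs whose blocks are cycles or $K_2$, then optimise over the few possible block configurations using Lemma \ref{count}. Fix $k\in\{1,2,3\}$ and let $G$ attain $\max\{W(G):G\in\mathfrak{C}_{n,k}\}$.

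\textbf{Step 1 (edge-minimality).} By Lemma \ref{edge deletion}, $G$ is edge-minimal in $\mathfrak{C}_{n,k}$: no edge can be removed without changing the number of cut vertices. Hence every block of $G$ is either $K_2$ or a minimally $2$-connected graph.

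\textbf{Step 2 (replace blocks by cycles).} Next I would show that each block of order $m\ge 3$ may be assumed to be $C_m$. For a pendant block $B$ attached at $w$, apply Lemma \ref{count} with $G_1=B$. Proposition \ref{2-connected max} gives $W(B)\le W(C_m)$. I would also use the known bound $D_B(w)\le\lfloor m^2/4\rfloor=D_{C_m}(w)$ for a vertex of a $2$-connected graph, with equality for the cycle. Together these show that replacing $B$ by $C_m$ does not decrease $W$.

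For a non-pendant block, the cut vertices inside it matter. Here I would also need that the mutual distances of its attachment vertices can be made as large as possible. To arrange this, place them on $C_m$ at positions that dominate the original distances; this works because $2$-connectivity forces two internally disjoint paths between any two attachment vertices. This step is delicate, and I would handle it case by case, since $k\le 3$ means a block contains at most three cut vertices.

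\textbf{Step 3 (enumerate configurations).} With $k\le3$ cut vertices, the block--cut-vertex tree has only a handful of shapes. Each block is a cycle $C_{m_i}$ or $K_2$, with $\sum_i(m_i-1)=n-1$. Using Lemma \ref{count} repeatedly, together with the formulas for $W(C_m)$, $D_{C_m}$, $W(P_n)$ and $D_{P_n}(v_i)$, I would write $W(G)$ as an explicit polynomial in the block orders.

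\textbf{Step 4 (compare).} Finally I would compare these polynomials. The key local move merges two cycle blocks $C_a$ and $C_b$ into one larger cycle and turns the freed vertices into a pendant path, keeping the number of cut vertices fixed. I would show that this move strictly increases $W$ once $n$ exceeds the stated thresholds. Every configuration then reduces to a single cycle $C_{n-k}$ with a pendant path of length $k$ attached at one vertex; this is $L_{n,n-k}$, whose value is given by (\ref{WI_Lnk}). Arranging the $K_2$ blocks as one path at a single cut vertex, rather than several paths or paths between cycles, follows from the same Lemma \ref{count} computation together with the path distance formula.

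The main obstacle is Step 4 in the regime where several cycles have comparable lengths. There the cubic leading terms nearly cancel, and the inequality needs $n\ge7,10,14$ respectively. My first attempt would be to reduce to the worst case of two or three equal cycles, verify the inequality there, and check the small $n$ directly. Uniqueness then follows because every inequality used is strict unless $G\cong L_{n,n-k}$.
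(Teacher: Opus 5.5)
The paper gives no proof of this statement; it is imported from \cite{Pandey 1}. Judged on its own, your plan has a genuine gap in Step 2, and it falls exactly in part $(iii)$.

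\textbf{The gap in Step 2.} By Lemma \ref{count}, suppose a non-pendant block $B$ of order $m$ carries the cut vertices $x_1,\dots,x_s$, and the part of $G$ hanging at $x_i$ has $h_i$ vertices (including $x_i$). Then $W(G)$ depends on $B$ only through
\[
W(B)+\sum_{i}(h_i-1)\,D_B(x_i)+\sum_{i<j}(h_i-1)(h_j-1)\,d_B(x_i,x_j).
\]
For $s\le 2$ your replacement works. The vertices $x_1,x_2$ lie on a common cycle of $B$, so $d_B(x_1,x_2)\le\lfloor m/2\rfloor$, and an antipodal placement on $C_m$ dominates.

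For $s=3$ it fails; this occurs when $k=3$ and one block carries all three cut vertices. The ``two internally disjoint paths'' argument puts any two attachment vertices on a common cycle, but not three. On $C_m$ the three pairwise distances of any three vertices sum to at most $m$, whereas in a $2$-connected block they can sum to more.

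Concretely, take $B=K_{2,3}$ with $x_1,x_2,x_3$ the vertices of the side of size $3$. All their pairwise distances are $2$, but no three vertices of $C_5$ are pairwise at distance $2$. Also $W(C_5)-W(K_{2,3})=15-14=1$ and $D_{C_5}(y)=D_{K_{2,3}}(x_i)=6$. Hence every replacement of $B$ by $C_5$ changes $W$ by at most $1-\min_{i<j}(h_i-1)(h_j-1)$.

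Now attach a copy of $C_h$ ($h\ge 3$) at each $x_i$. This gives a graph in $\mathfrak{C}_{3h+2,3}$ for which every such replacement strictly decreases $W$, by at least $h(h-2)$. So the monotone reduction to graphs whose blocks are cycles or $K_2$ is not available for $k=3$. A central block carrying three cut vertices needs a separate global argument.

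\textbf{A route that avoids this.} The tools the paper quotes from \cite{Pandey 1} next to this statement suggest a route that never looks inside a non-pendant block: induction on $k$ through $s$-pendant blocks. For $k\ge 2$, take an $s$-pendant block with cut vertex $w$ (Lemma \ref{s pendant blocks}). Split $G$ at $w$ into two pieces:
\begin{enumerate}
\item[$(a)$] the union $G_{01}$ of the pendant blocks at $w$, with $n_1$ vertices;
\item[$(b)$] the rest $G_{02}\in\mathfrak{C}_{n_2,k-1}$, in which $w$ is not a cut vertex.
\end{enumerate}
Bound the terms of Lemma \ref{count} using the $(k-1)$ case for $W(G_{02})$ and Lemma \ref{Dmax} for $D_{G_{02}}(w)$. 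For $n_1=2$ the resulting bound equals $W(L_{n,n-k})$, since $L_{n,n-k}$ is exactly $K_2$ glued to the pendant vertex of $L_{n-1,n-k}$. It then remains to check that the bound is smaller for other $n_1$. The paper runs this same scheme in its $\mathfrak{C}_{11,3}(5)$ case.

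\textbf{Step 4.} Separately, your Step 4 is only asserted, and it is tight at the thresholds. By Remark \ref{Rmk L-13-3}, two copies of $C_6$ joined by a path of length $2$ already have $W=264=W(L_{13,10})$. So the cycle-merging comparisons must actually be computed, not just claimed for large $n$.
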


\noindent We also note the following two remarks from \cite{Pandey 1}.
\begin{remark}[\cite{Pandey 1}, Table 2]\label{Rmk L-9-2}
Let $G\in \mathfrak{C}_{9,2}$. Then $W(G)\leq 88$ and the equality is attained only by $L_{9,7}$ and the graph obtained by identifying a pendant vertex of $K_{1,3}$ with a vertex of $C_6$.
\end{remark}

\begin{remark}[\cite{Pandey 1}, Table 3]\label{Rmk L-13-3}
Let $G\in \mathfrak{C}_{13,3}$. Then $W(G)\leq 264$ and the equality is attained only by $L_{13,10}$ and the graph obtained by identifying the pendant vertex of $L_{8,6}$ with a vertex of $C_6$.
\end{remark}

\noindent The problem of characterising the graphs with maximum Wiener index in $\mathfrak{C}_{n,k}$, $k\geq 4$ is left open in \cite{Pandey 1}.  An {\it $s$- pendant block} (see \cite{Pandey 1}) of a graph is a pendant block which shares its cut vertex with exactly one non-pendant block. 
\begin{lemma}[\cite{Pandey 1}, Proposition 3.3]\label{s pendant blocks}
Let $G\in \mathfrak{C}_{n,k}$, $k\geq 2$. Then $G$ has at least two vertex-disjoint $s$-pendant blocks.
\end{lemma}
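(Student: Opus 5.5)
The plan is to work with the block–cut vertex tree of $G$ and reduce the statement to the elementary fact that a tree with at least two vertices has at least two leaves.

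\emph{The block–cut vertex tree.} Let $\mathcal{B}(G)$ be the bipartite graph whose vertex set consists of the blocks and the cut vertices of $G$. A block $B$ is joined to a cut vertex $c$ whenever $c\in V(B)$. Since $G$ is connected, $\mathcal{B}(G)$ is a tree. Its leaves are exactly the pendant blocks, because every cut vertex lies in at least two blocks. In the terminology of this paper, a non-pendant block is a block of degree at least $2$ in $\mathcal{B}(G)$.

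\emph{Pruning the pendant blocks.} Delete all pendant blocks, i.e.\ all leaves of $\mathcal{B}(G)$, and call the result $\mathcal{T}'$. Deleting leaves keeps a tree connected and acyclic, so $\mathcal{T}'$ is a tree. Every cut vertex of $G$ survives, and since $k\geq 2$, $\mathcal{T}'$ has at least two vertices. Hence $\mathcal{T}'$ has at least two leaves. No non-pendant block $B$ is a leaf of $\mathcal{T}'$: $B$ contains at least two cut vertices, all of which remain in $\mathcal{T}'$. So the leaves of $\mathcal{T}'$ are two distinct cut vertices, say $c_1$ and $c_2$.

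\emph{Locating the $s$-pendant blocks.} Take $c_i$ with $i=1,2$. Being a leaf of $\mathcal{T}'$, it is adjacent there to exactly one block, which is non-pendant. In $\mathcal{B}(G)$ the vertex $c_i$ has degree at least $2$, so it must also lie in at least one pendant block $B_i$. Thus $B_i$ shares its cut vertex $c_i$ with exactly one non-pendant block, which makes $B_i$ an $s$-pendant block.

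\emph{Disjointness.} Two distinct blocks intersect in at most one vertex, and that vertex is a cut vertex common to both. The only cut vertex of $B_1$ is $c_1$ and the only cut vertex of $B_2$ is $c_2\neq c_1$, so $B_1\cap B_2=\emptyset$.

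The only step needing care is checking that every leaf of $\mathcal{T}'$ is a cut vertex. This uses that a non-pendant block contains at least two cut vertices, which is the definition used in the paper. Everything else is routine.
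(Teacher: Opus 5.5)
Your proof is correct and complete. The paper gives no proof of this lemma; it quotes it from Proposition~3.3 of the authors' earlier work, so there is no in-paper argument to compare against. Pruning the pendant blocks from the block--cut-vertex tree and taking the leaves of what remains is a valid, self-contained way to establish it.

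The step you flag is handled correctly: leaves of $\mathcal{T}'$ are cut vertices because a non-pendant block has at least two cut vertices. This uses the fact, stated in the paper, that when $G$ is connected and not $2$-connected every block contains a cut vertex. Your reading of ``$s$-pendant'' also matches how the paper uses the notion later. There, other pendant blocks may sit at the same cut vertex, as with $B_1$ and the $K_2$'s hanging at $w_1$ in the $\mathfrak{C}_{11,3}(5)$ case.

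One tacit point is worth stating. Deleting all leaves of a tree leaves a tree only if the tree is not $K_2$. Here this is automatic, because $k\geq 2$ makes $\mathcal{B}(G)$ contain at least two cut vertices.
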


\noindent Recall that the graph $L_{n, n-k} \in \mathfrak{C}_{n,k}$ and it cotains exactly one pendant vertex. Further $diam (L_{n,n-k})=k+\lfloor\frac{n-k}{2}\rfloor$.

\begin{lemma}[\cite{Pandey 1}, Theorem 4.8] \label{Dmax}
Let $z$ be the pendant vertex of $L_{n,n-k}$ and $G\in \mathfrak{C}_{n,k}$,  $1\leq k\leq n-3$. Then $D_G(v)\leq D_{L_{n,n-k}}(z)$ for any $v\in V(G)$. 
\end{lemma}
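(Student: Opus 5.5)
The plan is to prove the bound by a layer-counting (BFS) argument from $v$, comparing tail sums of the layer sizes with those of $L_{n,n-k}$ viewed from $z$.

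\emph{Setup.} Fix $G\in\mathfrak{C}_{n,k}$ and $v\in V(G)$, and let $e=e(v)$. For $0\le i\le e$ put $N_i=\{u: d(v,u)=i\}$ and $n_i=|N_i|$. Then $n_0=1$, $n_i\ge 1$ for $0\le i\le e$, $\sum_i n_i=n$, and
\[
D_G(v)=\sum_{i\ge 1} i\,n_i=\sum_{i\ge 1} M_i, \qquad M_i:=\sum_{j\ge i} n_j .
\]
The structural input is that if $1\le i\le e-1$ and $n_i=1$, then the unique vertex of $N_i$ separates $N_0\cup\cdots\cup N_{i-1}$ from $N_{i+1}\cup\cdots\cup N_e$. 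It is therefore a cut vertex, and distinct such $i$ give distinct cut vertices. Hence at most $k$ interior layers are singletons, and every other interior layer has size at least $2$.

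\emph{Lower bound on initial segments.} For each $i\ge 1$ the constraints give
\[
n-M_i=\sum_{j<i} n_j\ \ge\ 1+(i-1)+\max\{0,\,i-1-k\},
\]
because layers $1,\dots,i-1$ contain at most $k$ singletons and the rest have size at least $2$. This bound holds for every $i$ up to $e$, and $M_i=0$ for $i>e$. It already forces $e\le \frac{n+k}{2}$, and $k\le n-3$ keeps the count consistent. Consequently $M_i\le \max\{0,\; n-i-\max\{0,i-1-k\}\}$ for every $i\ge 1$.

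\emph{Equality case and comparison.} Next I compute the layer profile of $L_{n,n-k}$ from its pendant vertex $z$. The $k$ cut vertices on the path occupy layers $1,\dots,k$ as singletons. The cycle $C_{n-k}$ is entered at distance $k$ and contributes layers of size $2$, ending with one final layer of size $1$ or $2$ according to the parity of $n-k$. For this profile every inequality above is an equality, i.e.\ $M_i(L_{n,n-k},z)=\max\{0,\,n-i-\max\{0,i-1-k\}\}$ for all $i$. Summing over $i$ yields $D_G(v)=\sum_i M_i\le \sum_i M_i(L_{n,n-k},z)=D_{L_{n,n-k}}(z)$.

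\emph{Expected obstacle.} The main point to check carefully is the last-layer and parity bookkeeping. Concretely, I must confirm that the pointwise bound $\max\{0,n-i-\max\{0,i-1-k\}\}$ is attained by $L_{n,n-k}$ at \emph{every} $i$, including the final layer when $n-k$ is odd. If one index fails, I would fall back on an exchange argument instead: among admissible sequences $(n_i)$, moving a vertex from an earlier layer to a later one while respecting the singleton budget strictly increases $\sum i\,n_i$. This shows the maximiser is the greedy profile with singletons first and then $2$'s, which is exactly the profile of $L_{n,n-k}$ from $z$.
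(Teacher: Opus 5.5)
The paper does not prove this lemma. It quotes it without proof from the authors' earlier work (Theorem 4.8 there), so there is no in-paper argument to compare with. Judged on its own, your layer-counting proof is correct and complete.

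Both ingredients are sound:
\begin{itemize}
\item Adjacent vertices lie in equal or consecutive layers. So a singleton layer $N_i$ with $1\le i\le e-1$ is a cut vertex, and at most $k$ of $N_1,\dots,N_{e-1}$ are singletons. This gives $M_i\le\max\{0,\,n-i-\max\{0,i-1-k\}\}$ for every $i\ge 1$. You rightly state the initial-segment inequality only for $i\le e$, and the truncation at $0$ covers $i>e$.
\item The identity $D_G(v)=\sum_{i\ge 1}M_i$ holds.
\end{itemize}

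The bookkeeping you flagged as the obstacle goes through, so the exchange-argument fallback is not needed. Put $g=n-k\ge 3$. Seen from $z$, the layers of $L_{n,n-k}$ are singletons at distances $1,\dots,k$ and pairs afterwards. Hence $M_i=n-i$ for $i\le k+1$, and $M_i=n+k+1-2i$ for $k+1\le i\le k+\lfloor g/2\rfloor$.
\begin{itemize}
\item If $g$ is odd, the last layer is at distance $k+\frac{g-1}{2}$. The bound $n+k+1-2i$ vanishes exactly at $i=k+\frac{g+1}{2}$ and is negative beyond.
\item If $g$ is even, the last layer is the antipodal singleton at distance $k+\frac g2$, where the bound equals $1=M_i$. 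At the next index the bound is $-1$, which the truncation at $0$ absorbs.
\end{itemize}
So equality holds termwise in both parities, and summing gives $D_G(v)\le D_{L_{n,n-k}}(z)$.

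Compared with simply citing the result, your route buys several things:
\begin{itemize}
\item It is elementary and self-contained.
\item It uses only that $G$ has \emph{at most} $k$ cut vertices, so it proves the bound for that larger class.
\item For $k=0$ it specialises to the classical estimate $D_G(v)\le D_{C_n}(u)$ for $2$-connected graphs.
\item The hypothesis $k\le n-3$ enters only to guarantee that $L_{n,n-k}$ exists.
\end{itemize}
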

\noindent Table  \ref{MaxWI_2} lists the graphs attaining maximum Wiener in $\mathfrak{C}_{n,2}$, $6\leq n\leq 10$. The entries in the table for $6\leq n\leq 9$ have been adapted from \cite{Pandey 1} (see Table 2 in  \cite{Pandey 1}). For $n=10$, the entries follow from Proposition \ref{max fixed cut vertices} $(ii)$. Now we prove that the Conjecture \ref{the conjecture} is true for graphs with at most $3$ cut vertices.
\begin{table}[h!]
\begin{center}
\begin{tabular}{|c|c|c|}
\hline
$n$&Graphs $G$ having maximum W.I. over $\mathfrak{C}_{n,2}$&$W(G)$\\
\hline
$6$& $L_{6,4}$ and \hskip .3cm \begin{tikzpicture}[scale=.5]
\filldraw (-0.9,0.6,0) circle [radius=.5mm]--(0,0) circle [radius=.5mm]--(1,0) circle [radius=.5mm]--(1.9,0.6) circle [radius=.5mm];
\filldraw (-0.9,-0.6) circle [radius=.5mm]--(0,0);
\filldraw (1,0)--(1.9,-0.6) circle [radius=.5mm];
\end{tikzpicture}
& $29$\\
\hline
$7$&\hskip 2cm \begin{tikzpicture}[scale=.5]
\filldraw (1,0) circle [radius=.5mm]--(2,0) circle [radius=.5mm]--(2.9,0.6) circle [radius=.5mm]--(3.8,0) circle [radius=.5mm]--(2.9,-0.6) circle [radius=.5mm];
\draw (2,0)--(2.9,-0.6); 
\filldraw (0.1,0.6)circle [radius=.5mm]--(1,0);
\filldraw(1,0)--(0.1,-0.6) circle [radius=.5mm];
\end{tikzpicture}
& $44$\\
\hline
$8$ &  $L_{8,6}$ and
\begin{tikzpicture}[scale=.5]
\filldraw (0,0) circle [radius=.5mm]--(.5,.5) circle [radius=.5mm]--(1,0) circle [radius=.5mm]--(.5,-.5) circle [radius=.5mm]   (2,0) circle [radius=.5mm]--(2.5,.5) circle [radius=.5mm]--(3,0) circle [radius=.5mm]--(2.5,-.5) circle [radius=.5mm];
\draw(0,0)--(.5,-.5) (1,0)--(2,0)--(2.5,-.5);
\end{tikzpicture}
& $64$\\
\hline
$9$& $L_{9,7}$ and \begin{tikzpicture}[scale=.5]
\filldraw (0.1,0.5) circle [radius=.5mm]--(1,0) circle [radius=.5mm]--(2,0) circle [radius=.5mm]--(2.9,0.6) circle [radius=.5mm]--(3.8,0.6) circle [radius=.5mm]--(4.7,0) circle [radius=.5mm]--(3.8,-0.6) circle [radius=.5mm]--(2.9,-0.6) circle [radius=.5mm];
\draw (2,0)--(2.9,-0.6); 
\draw (0.1,-0.5) circle [radius=.5mm]--(1,0);
\end{tikzpicture}
& $88$ \\
\hline
$10$ & $L_{10,8}$ & $121$\\
\hline
\end{tabular}
\end{center}
\label{default}
\caption{ The graphs having maximum Wiener index over $\mathfrak{C}_{n,2}$ for $6\leq n\leq 10$} \label{MaxWI_2}
\end{table}

\begin{theorem}
Let $n\geq 7$ be an odd integer and $G\in \mathfrak{C}_{n,k}(\frac{n-1}{2})$, $0\leq k \leq 3$. Then $W(G)\leq W(C_n)$.
\end{theorem}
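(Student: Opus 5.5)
We handle the four values of $k$ separately; the common strategy is to find a graph $G'$ with $W(G)\le W(G')\le W(C_n)$, where $G'$ maximises the Wiener index over all of $\mathfrak{C}_{n,k}$ (ignoring diameter). For $k=0$ this is immediate from Proposition \ref{2-connected max}: $W(G)\le W(C_n)$ with equality only for $C_n$, and $diam(C_n)=\frac{n-1}{2}$.

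For $1\le k\le 3$ and $n$ large enough ($n\ge 7$, $n\ge 10$, $n\ge 14$ respectively), Proposition \ref{max fixed cut vertices} gives $W(G)\le W(L_{n,n-k})$. We then use (\ref{WI_Lnk}) with $g=n-k$ to compare $W(L_{n,n-k})$ with $W(C_n)=\frac{n(n^2-1)}{8}$. Since $L_{n,n-k}$ has diameter $k+\lfloor\frac{n-k}{2}\rfloor>\frac{n-1}{2}$, it is not itself in the family, so a strict comparison is not automatic. We therefore first try to show $W(L_{n,n-k})\le W(C_n)$ by a direct polynomial computation, treating $g$ even and odd separately. The leading terms are $\frac{g^3}{8}+k\cdot\frac{n^2+ng}{6}-k\frac{g^2}{12}\approx \frac{n^3}{8}-\frac{3kn^2}{8}+\frac{kn^2}{4}$, which is smaller than $\frac{n^3}{8}$ by about $\frac{kn^2}{8}$. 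So the inequality should hold for all sufficiently large $n$; we expect it to hold from the thresholds onward, and we verify the lower-order terms explicitly.

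The small cases left uncovered are $k=2$ with $n\in\{7,9\}$ and $k=3$ with $n\in\{7,9,11,13\}$. For $k=2$ we read off maxima from Table \ref{MaxWI_2}: $44\le W(C_7)=42$ fails, so here we cannot rely on the unrestricted maximum. Instead, for $n=7$ (diameter $3$) we use the complete list of edge-minimal graphs in $\mathfrak{C}_{7,2}(3)$ (Table \ref{Trees in 7-2-3}, maximum $42=W(C_7)$). For $n=9$, Remark \ref{Rmk L-9-2} gives $88\le W(C_9)=90$. For $k=3$, $n=7$, the unique edge-minimal graph $G_3$ is used, and $W(G_3)\le 42$ is checked directly. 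For $n=9$ we use diameter $4$. An edge-minimal reduction either yields a tree in $\mathfrak{T}_9(4)$, bounded via Proposition \ref{Wagner} (the maximum being $S(2,3,3)$ or similar, below $90$), or one of $G_7,\dots,G_{10}$, whose values are at most $84$. For $n=13$, Remark \ref{Rmk L-13-3} gives $264\le W(C_{13})=273$. For $n=11$ we need an analogous bound below $W(C_{11})=165$. We would try to obtain it by summing Lemma \ref{Dmax} via (\ref{WI in terms of distance}), or by splitting at an $s$-pendant block (Lemma \ref{s pendant blocks}) using Lemma \ref{count}, together with the diameter constraint $5$.

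The main obstacle is the small-$n$ cases where the unrestricted maximum exceeds $W(C_n)$, chiefly $k=3$, $n=11$ (and $k=2$, $n=7$). There the diameter constraint must genuinely be used, through edge-minimality and case enumeration, which is why the lists in Section \ref{edge-minimality} were prepared.
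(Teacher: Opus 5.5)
Your case split matches the paper's everywhere except $k=3$, $n=11$. That case, which you yourself flag as the main obstacle, is left unproved: you only name candidate tools.

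The first tool cannot work. Summing Lemma~\ref{Dmax} via (\ref{WI in terms of distance}) only gives $W(G)\le \frac{11}{2}D_{L_{11,8}}(z)=\frac{473}{2}$. More to the point, no bound over all of $\mathfrak{C}_{11,3}$ can suffice. Identifying the pendant vertex of $L_{6,4}$ with a vertex of $C_6$ gives a graph in $\mathfrak{C}_{11,3}$ with $W=29+27+5\cdot 9+5\cdot 13=166>165=W(C_{11})$ by Lemma~\ref{count}. So the diameter $5$ has to enter through a structural reduction, and that reduction is the missing idea.

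The paper's reduction goes as follows. Take $G_0$ maximising $W$ over $\mathfrak{C}_{11,3}(5)$, so $G_0$ is edge-minimal. Pick vertex-disjoint $s$-pendant blocks $B_1,B_2$ (Lemma~\ref{s pendant blocks}) with cut vertices $w_1,w_2$ and $|V(B_1)|\le |V(B_2)|$. Then show that $B_1$ and all pendant blocks at $w_1$ are $K_2$:
\begin{itemize}
\item a pendant triangle has an edge whose deletion stays in $\mathfrak{C}_{11,3}(5)$ and increases $W$;
\item a pendant block of order at least $4$ has a vertex at distance at least $2$ from its cut vertex, since otherwise an edge is again deletable;
\item applying this to $B_1$ and to $B_2$ (recall $|V(B_2)|\ge |V(B_1)|$), together with $d(w_1,w_2)\ge 2$, would force $diam(G_0)\ge 6$.
\end{itemize}
Hence $G_0$ is a star $K_{1,n_1-1}$ glued at its centre $w_1$ to some $G_{02}\in\mathfrak{C}_{n_2,2}$, with $n_1+n_2=12$ and $2\le n_1\le 6$. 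Lemma~\ref{count} then yields (\ref{WI n1-n2}), using Table~\ref{MaxWI_2} to bound $W(G_{02})$ and Lemma~\ref{Dmax} to bound $D_{G_{02}}(w_1)$. The five values of $n_1$ give $164,162,160,152,144$, all below $165$.

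Two smaller corrections:
\begin{itemize}
\item For $n=9$, the Wagner tree has $W(S(2,3,3))=90=W(C_9)$, not less. This is harmless, since only $\le$ is claimed.
\item Your asymptotic comparison for large $n$ should be replaced by the exact identities $8\bigl(W(C_n)-W(L_{n,n-k})\bigr)=(n-1)(n-7)$, $2(n^2-10n+17)$, $3n^2-40n+85$ for $k=1,2,3$ respectively. These are nonnegative in the ranges you use.
\end{itemize}
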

\begin{proof}
Suppose $G\in \mathfrak{C}_{n,0}(\frac{n-1}{2})$. It follows from proposition \ref{2-connected max} that $W(G)\leq W(C_n)$.\\

Suppose $G\in \mathfrak{C}_{n,1}(\frac{n-1}{2})$. Then $G\ncong L_{n,n-1}$ because $diam(L_{n,n-1})=\frac{n+1}{2}$. Therefore by  Proposition \ref{max fixed cut vertices} $(i)$,
\begin{align*}
W(G)&<W(L_{n,n-1}) \\
	&=\frac{n^3-n^2+7n-7}{8} &\mbox{[using $g=n-1$ in (\ref{WI_Lnk})]}\\ 
	&\leq \frac {n^3-n}{8}\\
	&=W(C_n).
\end{align*}

Suppose $G\in \mathfrak{C}_{n,2}(\frac{n-1}{2})$. Clearly $G\ncong L_{n,n-2}$ as $diam(L_{n,n-2})=\frac{n+1}{2}$. If $n\geq 11$, then by Proposition \ref{max fixed cut vertices} $(ii)$,
\begin{align*}
W(G)&<W(L_{n,n-2})\\ 
	&=\frac{n^3-2n^2+19n-34}{8} &\mbox{[using $g=n-2$ in (\ref{WI_Lnk})]}\\ 
	&<\frac {n^3-n}{8}\\
	&=W(C_n).
\end{align*}
Further, the non-isomorphic edge-minimal graphs in $\mathfrak{C}_{7,2}(3)$ are the three graphs shown in Table \ref{Trees in 7-2-3} and the maximum Wiener index among them is $42$. So, if $G\in \mathfrak{C}_{7,2}(3)$, then $W(G)\leq 42=W(C_7)$. If $G\in \mathfrak{C}_{9,2}(3)$, then by Remark \ref{Rmk L-9-2}, $W(G)< 88< W(C_9)$.

Finally suppose $G\in \mathfrak{C}_{n,3}( \frac{n-1}{2})$. Clearly $G\ncong L_{n,n-3}$. If $n\geq 15$, then by Proposition \ref{max fixed cut vertices} $(iii)$,
\begin{align*}
W(G)&< W(L_{n,n-3})\\
        &= \frac{n^3-3n^2+39n-85}{8} &\mbox{[using $g=n-3$ in (\ref{WI_Lnk})]}\\ 
        &< \frac{n^3-n}{8} \\
        &=W(C_n).
\end{align*}
 Let $G\in \mathfrak{C}_{7,3}(3)$. The graph $G_3$ in Figure \ref{edge-minimal in C-7-3-3} is the only edge-minimal graph in $\mathfrak{C}_{7,3}(3)$ and $W(G_3)=40$. Therefore, $W(G)\leq 40< W(C_7)$. \\
 
 Let $G\in \mathfrak{C}_{9,3}(4)$. If $G$ is a tree or reducible to a tree in $\mathfrak{T}_9(4)$, then by Proposition \ref{Wagner}, $W(G)\leq W(S(2,3,3))=90=W(C_9)$. Suppose $G$ is not reducible to a tree in $\mathfrak{T}_9(4)$. Then $G$ is isomorphic to one of the graphs $G_7, G_8, G_9$ or $G_{10}$ of Figure \ref{edge-minimal in 9-3-4}.  So $W(G)\leq \max \{W(G_7), W(G_8), W(G_9), W(G_{10})\}=84<W(C_9)$.\\
 
Now let $G\in \mathfrak{C}_{11,3}(5)$ and  let $G_0\in \mathfrak{C}_{11,3}(5)$ such that $W(G_0)=\max \{W(G): G\in \mathfrak{C}_{11,3}(5)\}$. So $W(G)\leq W(G_0)$. Clearly $G_0$ is edge-minimal in $\mathfrak{C}_{11,3}(5)$. By Lemma \ref{s pendant blocks}, $G_0$ has at least two vertex-disjoint $s$-pendant blocks. Let $B_1$ and $B_2$ be two such blocks, and let $|V(B_1)|\leq |V(B_2)|$.\\
 \noindent {\bf Claim:} $B_1$ and all the pendant blocks adjacent to $B_1$ are $K_2$. \\
 Suppose not. WLOG, assume that $B_1$ is a block with the smallest order among $B_1$ and the pendant blocks adjacent to it. Suppose $B_1$ is a triangle, say $B_1\cong a\sim b\sim c\sim a$. Let $a$ be the cut vertex of $G_0$ in $B_1$. Then the graph $G_0\setminus \{bc\}\in \mathfrak{C}_{11,3}(5)$, and $W(G_0\setminus \{bc\})>W(G_0)$, which is a contradiction. So $|V(B_1)|\geq 4$. Now let $w_1$ and $w_2$ be the cut vertices of $G_0$ in $B_1$ and $B_2$, respectively. As $G_0$ is edge-minimal in $\mathfrak{C}_{11,3}(5)$ and $B_1, B_2$ are vertex-disjoint, it follows that $d(w_1,w_2)\geq 2$. Let $w_1'\in V(B_1)$ be a vertex farthest from $w_1$ and $w_2'\in V(B_2)$ be a vertex farthest from $w_2$. If $d(w_1,w_1')=1$, then there exists a triangle $w_1\sim w_1'\sim w_1''\sim w_1$ in $B$, and we get a contradiction as $G_0\setminus \{w_1'w_1''\}\in \mathfrak{C}_{11,3}(5)$. So $d(w_1,w_1')\geq 2$. Similarly $d(w_2,w_2')\geq 2$. Therefore we get $d(w_1',w_2')\geq 6$, a contradiction. This proves the claim.\\
 
  As $diam(G_0)=5$, there can be at most $4$ $K_2$ pendant blocks adjacent to $B_1$.  Let $G_{01}$ be the subgraph of $G_0$ containing $B_1$ and all the pendant blocks adjacent to it, and $G_{02}$ be the maximal subgraph of $G_0$ containing $w_1$ as a non-cut vertex. Let $|V(G_{01})|=n_1$ and $|V(G_{02})|=n_2$. Then $G_{01}\cong K_{1,n_1-1}$ and  $G_{02}\in \mathfrak{C}_{n_2,2}$. Therefore $G_0\cong K_{1,n_1-1}\cup G_{02}$ with $V(K_{1,n_1-1})\cap V(G_{02})=\{w_1\}$. Also, $2\leq n_1\leq 6$ and $6\leq n_2 \leq 10$ with $n_1+n_2-1=11$.  Let $G_{02}'$ be a graph attaining the maximum Wiener index in $\mathfrak{C}_{n_2,2}$ and $z$ be the pendant vertex of $L_{n_2,n_2-2}$. Then
  \begin{align*}
  W(G_0)&=W(K_{1,n_1-1})+W(G_{02})+(n_2-1)D_{K_{1,n_1-1}}(w_1)+(n_1-1)D_{G_{02}}(w_1)\\
  	      &\leq (n_1-1)^2+W(G_{02}')+(n_2-1)(n_1-1)+(n_1-1)D_{L_{n_2,n_2-2}}(z). \numberthis \label{WI n1-n2}
  \end{align*}
Depending on the values of $n_1$ and $n_2$, and using the information from Table \ref{MaxWI_2}, an upper bound on the Wiener indices of all possibilities of $G_0$ have been computed in Table \ref{Max G0 n1-n2}. From Table \ref{Max G0 n1-n2}, we get that $W(G_0)\leq 164<165=W(C_{11})$. \\
 
 \begin{table}[h!]
\begin{center}
\begin{tabular}{|c|c|c|}
\hline
$n_1$& $n_2$ & $W(G_0)\leq$\\
\hline 
2& 10& 164 \\
\hline 
3& 9& 162\\
\hline 
4& 8& 160\\
\hline 
5& 7& 152\\
\hline 
6& 6& 144\\
\hline
\end{tabular}
\caption {The maximum Wiener index of $G_0$ in (\ref{WI n1-n2}) depending on the values of $n_1$ and $n_2$}\label{Max G0 n1-n2}
\end{center}
\label{default}
\end{table}%
 
Finally, let $G\in \mathfrak{C}_{13,3}(5)$. Then by Remark \ref{Rmk L-13-3}, $W(G)\leq 264< W(C_{13})$. This completes the proof.
 \end{proof}
 
\begin{theorem}
Let $n\geq 7$ be an odd integer and $G\in \mathfrak{C}_{n,n-4}(\frac{n-1}{2})$. Then $W(G)\leq W(C_n)$.
\end{theorem}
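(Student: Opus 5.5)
The plan is to reduce to edge-minimal graphs and then compare two explicit Wiener indices with $W(C_n)=\frac{n(n^2-1)}{8}$.

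\textbf{Step 1: reduction.} Let $G\in\mathfrak{C}_{n,n-4}(\frac{n-1}{2})$. Deleting edges inside this family only increases the Wiener index, by Lemma \ref{edge deletion}. So $G$ is either edge-minimal or reducible to an edge-minimal graph $G'$ in the same family, with $W(G)\le W(G')$. By Lemma \ref{edge-minimal graphs in n-4 cut vertices graphs}, $G'\cong T_{21}$ when $n\equiv 1\pmod 4$ and $G'\cong G_{12}$ when $n\equiv 3\pmod 4$. Hence it suffices to prove $W(T_{21})\le W(C_n)$ and $W(G_{12})\le W(C_n)$ in the respective cases.

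\textbf{Step 2: the case $n=4t+1$, $t\ge 2$.} Here $T_{21}$ is a spider: centre $v_t$ with four legs, each carrying $t$ further vertices. I would compute $W(T_{21})$ by splitting the vertex pairs into two types.
\begin{itemize}
\item Pairs inside a single leg together with the centre: each such leg-plus-centre is a path $P_{t+1}$, contributing $\binom{t+2}{3}$, for a total of $4\binom{t+2}{3}$.
\item Pairs in two different legs: at depths $i,j\in\{1,\dots,t\}$ they are at distance $i+j$. One pair of legs therefore contributes $t^2(t+1)$, and there are $6$ such pairs of legs.
\end{itemize}
This gives
\begin{align*}
W(T_{21})=\frac{2t(t+1)(t+2)}{3}+6t^2(t+1)=\frac{4t(t+1)(5t+1)}{3}.
\end{align*}
For comparison,
\begin{align*}
W(C_{4t+1})=t(2t+1)(4t+1).
\end{align*}
After dividing by $t$, the inequality becomes $4(t+1)(5t+1)\le 3(2t+1)(4t+1)$, that is, $0\le 4t^2-9t-1$. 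This holds for $t\ge 3$. For $t=2$ the two sides are $88$ and $90$, so the inequality also holds there, and it is strict.

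\textbf{Step 3: the case $n=4t+3$, $t\ge 1$.} Here $G_{12}$ is the triangle $ABC$ with pendant paths attached: two paths of $t$ extra vertices at $A$, and one such path at each of $B$ and $C$. I would compute $W(G_{12})$ with Lemma \ref{count}, cutting at $A$. This splits $G_{12}$ into two pieces.
\begin{itemize}
\item The path $P_{2t+1}$ through $A$, with $A$ as its middle vertex. Its distance $D(A)$ comes from the formula for $D_{P_n}(v_i)$.
\item The unicyclic piece $L_{2t+3,3}$, with $A$ on the triangle. Its Wiener index is read off from (\ref{WI_Lnk}), and its distance from $A$ is computed directly.
\end{itemize}
Assembling these gives a cubic polynomial in $t$ with leading term $\frac{20}{3}t^3$. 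The target $W(C_{4t+3})=\frac{(4t+3)(4t+2)(4t+4)}{8}$ has leading term $8t^3$. After that I would compare the two polynomials directly, and check the smallest values of $t$ by hand. In particular, for $t=1$ the graph is $G_{12}\cong G_3$, with $W(G_3)=40<42=W(C_7)$.

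\textbf{Main obstacle.} The only real work is the bookkeeping in computing $W(G_{12})$ correctly. I expect the difference $W(C_n)-W(G')$ to be a cubic in $t$ that is positive for all $t\ge1$. If the polynomial comparison turns out to be awkward, the fallback is to check the small values of $t$ numerically and use the leading-term gap $8t^3$ versus $\frac{20}{3}t^3$ for large $t$.
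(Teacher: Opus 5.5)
Your route is the paper's: pass to an edge-minimal graph, identify it as $T_{21}$ or $G_{12}$ via Lemma~\ref{edge-minimal graphs in n-4 cut vertices graphs}, and compare with $W(C_n)$. Your Step~1 is more careful than the paper. The paper writes $G\cong T_{21}$ (resp.\ $G\cong G_{12}$), although the lemma only describes edge-minimal graphs; your reduction via Lemma~\ref{edge deletion} is exactly what is needed there. The computations, however, need repair.

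In Step~2 your value $W(T_{21})=\frac{4t(t+1)(5t+1)}{3}$ is correct; it is the paper's $\frac{5n^3+9n^2-17n+3}{48}$ at $n=4t+1$. But $3(2t+1)(4t+1)-4(t+1)(5t+1)=4t^2-6t-1$, not $4t^2-9t-1$. So $W(C_n)-W(T_{21})=\frac{t(4t^2-6t-1)}{3}>0$ for all $t\ge 2$, and no separate case is needed. Your polynomial is negative at $t=2$, which contradicts your own correct values $88<90$.

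In Step~3 the unicyclic piece obtained by cutting at $A$ is not $L_{2t+3,3}$. It is the triangle $ABC$ with a pendant path of $t$ vertices at \emph{each} of $B$ and $C$: a path on $2t+2$ vertices through the edge $BC$, plus $A$ joined to $B$ and $C$. By contrast, $L_{2t+3,3}$ carries a single pendant path of $2t$ vertices at one triangle vertex; already for $t=1$ the degree sequences $(3,3,2,1,1)$ and $(3,2,2,2,1)$ differ. This matters in two ways.

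First, formula (\ref{WI_Lnk}) overestimates the Wiener index of this piece by $t^2$. At $t=1$ you would assemble $41$ rather than $W(G_3)=40$.

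Second, if you also took the distance of $A$ inside $L_{2t+3,3}$, that distance grows like $2t^2$ instead of $t^2$. The cubic coefficient would then become $\frac{26}{3}>8$, and the comparison with $W(C_n)$ would fail.

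Computing directly instead, this piece $H$ has $W(H)=\binom{2t+3}{3}+(t+1)(t+2)$ and $D_H(A)=(t+1)(t+2)$. The path piece has $W=\binom{2t+2}{3}$ and $D(A)=t(t+1)$. Lemma~\ref{count} then gives
\[
W(G_{12})=\binom{2t+2}{3}+\binom{2t+3}{3}+(t+1)(t+2)+2t(t+1)(t+2)+2t(t+1)^2=\frac{(t+1)(20t^2+31t+9)}{3},
\]
which is the paper's $\frac{5n^3+6n^2-11n-12}{48}$. Hence $W(C_{4t+3})-W(G_{12})=(t+1)(2t+1)(4t+3)-\frac{(t+1)(20t^2+31t+9)}{3}=\frac{t(t+1)(4t-1)}{3}>0$ for every $t\ge 1$.

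This exact difference is what your Step~3 is missing. ``Compare leading terms and check small $t$'' is not a proof without an explicit threshold, and with the closed form no threshold is needed.
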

\begin{proof}
Let $G\in \mathfrak{C}_{n,n-4}(\frac{n-1}{2})$. If $n\equiv 1 \pmod{4}$, then  by Lemma \ref{edge-minimal graphs in n-4 cut vertices graphs} $(i)$, $G\cong T_{21}$ (the tree in Figure \ref{n cong 1 mod 4}), and we get 
\begin{align*}
W(G)=W(T_{21})&= \frac{5n^3+9n^2-17n+3}{48}\\
			 &<\frac{n^3-n}{8} & \mbox{[for $n\geq 9$]}\\
			 &=W(C_n).
\end{align*}
If $n\equiv 3 \pmod{4}$, then by Lemma \ref{edge-minimal graphs in n-4 cut vertices graphs} $(ii)$, $G\cong G_{12}$ (the graph in Figure \ref{n cong 3 mod 4}), and we get 
\begin {align*}
W(G)=W(G_{12})&=\frac{5n^3+6n^2-11n-12}{48}\\
                           &<\frac{n^3-n}{8} & \mbox{[for $n\geq 7$]}\\
                           &=W(C_n).
\end{align*}
Here the Wiener indices of $T_{21}$ and $G_{12}$ are calculated using Lemma \ref{count}.
\end{proof}


\section{Conclusions}\label{Conclusions}
Our aim was to prove Conjecture \ref{the conjecture}. We first studied the extremal problem of finding the maximum Wiener index of trees with a fixed diameter. We obtained a necessary condition on a tree to be of maximal Wiener index in $\mathfrak{T}_n(d)$ and using it characterised the maximal trees in $\mathfrak{T}_n(n-k)$ for $k=4$ and $5$. Using our technique, the manual analysis for other larger values of $k$ becomes difficult because of many detailed calculations and pairwise comparisons. However we believe that using computational tools this characterisation can be done for larger values of $k$. If it could be done for $\mathfrak{T}_n(\frac{n-1}{2})$, $n$ odd, then it will verify Conjecture \ref{the conjecture} when $G$ is a tree. Next we proved Conjecture \ref{the conjecture} for graphs containing $0,1,2,3$ or $n-4$ cut vertices. To prove our results for $0,1,2$ and $3$ cut vertices, we used the characterisation of graphs with maximum Wiener indices in $\mathfrak{C}_{n,k}$ for $0\leq k\leq 3$ from \cite{Pandey 1}. The characterisation of graphs with maximum Wiener index in $\mathfrak{C}_{n,k}$ for $k\geq 4$ is open. Characterising these graphs may lead to the conjecture being completely settled. \\

\noindent{\bf Acknowledgements:} The authors gratefully acknowledge Stijn Cambie for bringing the conjecture to their attention during CanaDam 2025.\\

\noindent{\bf Funding:} This work was funded by the Einwechter Centre for Supply Chain Management (a research centre funded by Mr. Dan Einwechter) at Wilfrid Laurier University.\\

\noindent{\bf Declaration of generative AI use:} ChatGPT was used to improve the wording of some sentences in this paper.

\end{document}